\documentclass{amsart}

\usepackage[table]{xcolor}

\usepackage{hyperref}

\usepackage{enumerate}
\usepackage{upgreek}
\usepackage{bm}
\usepackage{pstool}


\usepackage{thmtools}
\usepackage{thm-restate}


\usepackage{url}

\newcommand{\proofof}[1]{\underline{\emph{Proof of} #1}}
\newenvironment{proofofpart}[1]%
	{\begin{proof}[\underline{Proof of {#1}}\textnormal{:}]}%
	{\end{proof}}

\definecolor{darkslateblue}{rgb}{0.28, 0.24, 0.55}
\definecolor{antiquefuchsia}{rgb}{0.57, 0.36, 0.51}
\definecolor{desertsand}{rgb}{0.93, 0.79, 0.69}






\DeclareFontFamily{U}{MnSymbolC}{}
\DeclareSymbolFont{MnSyC}{U}{MnSymbolC}{m}{n}
\DeclareMathSymbol{\diamonddot}{\mathbin}{MnSyC}{"7E}
\DeclareFontShape{U}{MnSymbolC}{m}{n}{
    <-6>  MnSymbolC5
   <6-7>  MnSymbolC6
   <7-8>  MnSymbolC7
   <8-9>  MnSymbolC8
   <9-10> MnSymbolC9
  <10-12> MnSymbolC10
  <12->   MnSymbolC12}{}


\newcommand{\ScalProd}{\bm{\cdot}}

\newcommand{\MinkProd}{\diamonddot}





\usepackage{amssymb}
\usepackage{amsthm}
\usepackage{tikz}

\theoremstyle{definition}

\newtheorem{theorem}{Theorem}[subsection]

\newtheorem{definition}[theorem]{Definition}
\newtheorem{lemma}[theorem]{Lemma}

\newtheorem{remark}[theorem]{Remark}
\newtheorem{corollary}[theorem]{Corollary}

\newtheorem{problem}{Problem}

\newcommand{\ca}[1]{\mathsf{Conc}\,#1\,}

\newcommand{\defeq}{\ \stackrel{\textsf{\tiny def}}{=} \ }
\newcommand{\defiff}{\stackrel{\textsf{\tiny def}}{\iff}}
\newcommand{\ie}{i.e., }
\newcommand{\eg}{e.g., }







\newcommand{\laa}{\langle}
\newcommand{\raa}{\rangle}
\newcommand{\la}{\langle}
\newcommand{\ra}{\rangle}

\newcommand{\PoiSim}{\mathsf{PoiSim}}
\newcommand{\Eucl}{\mathsf{Eucl}}
\newcommand{\EuclSim}{\mathsf{EuclSim}}

\newcommand{\TrivSim}{\mathsf{TrivEuclSim}}
\newcommand{\GenTriv}{\mathsf{TrivGalSim}}

\newcommand{\GalSim}{\mathsf{GalSim}}
\newcommand{\GenGal}{\mathsf{GalSim}}


\newcommand{\minkkerning}{\hspace{-1pt}}

\newcommand{\minkst}{\mathcal{M}\minkkerning\mathit{ink}}

\newcommand{\newtst}{\mathcal{N}\!\mathit{ewt}}

\newcommand{\galst}{\mathcal{G}\mathit{al}}

\newcommand{\euclg}{\mathcal{E}\!\mathit{ucl}}

\newcommand{\cng}[1]{\bm{{ \cong}}_{#1}}

\newcommand{\llcon}{\bm{\uplambda}}
\newcommand{\abstime}{{\mathsf{S}}}
\newcommand{\rest}{\mathsf{Rest}}

\newcommand{\simult}{\mathbf{S}}
\newcommand{\taxis}{\mathbf{T}}

\newcommand{\Nodel}{{\mathfrak{N}}}
\newcommand{\Model}{{\mathfrak{M}}}

\newcommand{\origin}{\vec{\textsf{\small o}}}
\newcommand{\unit}{\vec{\textsc e}}

\newcommand{\aut}[1]{\mathsf{Aut}\; #1}

\newcommand{\AffAut}[1]{\mathsf{AffAut}\,#1}


\newcommand{\fv}[1]{{#1}}

\newcommand{\OAff}{\mathcal{OA}\mathit{ff}}

\newcommand{\relst}{\mathcal{R}\mathit{el}}

\newcommand{\lclst}{\uplambda\mathcal{C}\mathit{lass}}
\newcommand{\Lclst}{\mathcal{LC}\mathit{lass}}

\newcommand{\field}{\mathfrak{F}}

\newcommand{\rel}{\mathsf{R}}

\newcommand{\F}{F}
\newcommand{\geometry}{\mathcal{G}}

\newcommand{\Bw}{{\mathsf{Bw}}}

\newcommand{\sqEd}[2]{\mathsf{d}^2(#1,#2\,)}
\newcommand{\sqMd}[2]{\mathsf{d}_{\mu}^2\left(#1,#2\,\right)}

\newcommand{\sqDist}[2]{\mathsf{d}_{\otimes}^2\left(#1,#2\,\right)}
\newcommand{\sqLen}[1]{\|#1\,\|_{\otimes}^{2}}

\title{Definable coordinate geometries over fields, part 2: applications} 
\author{Judit Madar\'asz}
\address{Judit Madar\'asz, HUN-REN Alfr\'ed R\'enyi Institute of Mathematics, Budapest, Hungary}
\email{madarasz.judit@renyi.hu}

\author{Mike Stannett}
\address{Mike Stannett, School of Computer Science, The University of Sheffield, Sheffield, UK}
\email{m.stannett@sheffield.ac.uk}

\author{Gergely Sz\'ekely}
\address{Gergely Sz\'ekely, HUN-REN Alfr\'ed R\'enyi Institute of Mathematics, Budapest, Hungary  \& University of Public Service, Budapest, Hungary.}
\email{szekely.gergely@renyi.hu}

\date{\today}

\thanks{This research is supported by the Hungarian National
  Research, Development and Innovation Office (NKFIH), grants
  no.\ FK-134732 and TKP2021-NVA-16.}

\begin{document}
\maketitle

\begin{abstract}
In Part 1 of this study we showed, for a wide range of geometries,
that the relationships between their concept-sets are fully determined
by those between their (affine) automorphism groups. In this
(self-contained) part, we show how this result can be applied to
quickly determine relationships and differences between various
geometries and spacetimes, including ordered affine, Euclidean,
Galilean, Newtonian, Late Classical, Relativistic and Minkowski
spacetimes (we first define these spacetimes and geometries using a
Tarskian first-order language centred on the ternary relation $\Bw$ of
betweenness). We conclude with a selection of open problems related to
the existence of certain intermediate geometries.
\end{abstract}

Keywords: \keywords{Definability; Concepts; Coordinate geometries;
  Minkowski spacetime; Newtonian spacetime; Galilean spacetime;
  Automorphisms}

\section{introduction}
In Part 1 \cite{part1}, we introduced the notion of finitely
field-definable coordinate geometries over fields and ordered fields,
and proved that a relation is explicitly definable in such a geometry
exactly if it is field-definable and respected by the (affine)
automorphisms of the geometry \cite[Thm.5.1.2]{part1}. This result and
its associated Corollary 5.1.5 (recalled here as
Corollary~\ref{re-cor-erlangen} below) yield a simple but powerful
strategy for comparing the conceptual content of different geometries:
\emph{If we wish to compare the sets of concepts of finitely
field-definable coordinate geometries over the same field and of the
same dimension, it is enough to compare their affine automorphism
groups.}

In this (self-contained) paper, we demonstrate the power of this
approach by showing the connection between the concept-sets of various
classical geometries, including Galilean, Newtonian and Relativistic
spacetimes and Euclidean geometry, as illustrated in
Figures~\ref{fig:hasse} and \ref{tartalmazas-fig}. We define these
spacetimes and geometries using a Tarskian first-order language
centred around the ternary relation $\Bw$ of betweenness. The language
we use to model Galilean spacetime is the same as the one used by
\cite{ketland23}, who gives a second-order axiomatization for Galilean
spacetime using the basic notions of betweenness ($\Bw$), simultaneity
($\abstime$), and congruence on simultaneity ($\cng{\abstime}$).

In a spirit similar to ours, but using a metric approach to define
Galilean, Newtonian and Minkowski spacetimes instead of a Tarskian
first-order language, \cite{Barrett15} compares the ``amount of
structure'' in these models.\footnote{The results in \cite{Barrett15}
seemingly contradict our results because there, the automorphism group
of Newtonian spacetime is a subgroup of that of Minkowski spacetime,
whereas in our work it is not. This contradiction is only apparent; it
comes from the fact that the metric approach gives more rigid models
of these spacetimes (it fixes the units of measurements) while our
Tarskian approach gives unit-free models: certain scalings,
intuitively corresponding to changing the units of measurement, are
automorphisms of the spacetimes considered here but not those of \cite{Barrett15}).}  Similar
investigations of the lattices of concept-sets of various structures
can be found in \cite{SSU14,MuSem20,SemSop21,SemSop22,SemSop24}.

The work presented here is part of an ongoing project aiming to
understand the algebras of concepts of concrete mathematical
structures and the conceptual
  distances between them \cite{KSzLF20,KSz21,KSz25}. From a more
general point of view, this research is part of the wider
Andr\'eka--N\'emeti school's project aiming to provide
a logic-based foundation and understanding of relativity theories in
the spirit of the Vienna Circle and Tarski's initiative,
\textit{Logic, Methodology and Philosophy of Science}
\cite{AMNNSz11,Friend15,FF21}.

\section{Recalling some notations and results}

The notation and definitions used in this paper follow those of
\cite{part1}, to which the reader is referred for more details. We use
standard notions of model theory and set theory, and we assume that
the reader is familiar with basic algebraic structures like groups,
fields and vector spaces. By a \emph{model}, we mean a first-order
structure in the sense of model theory.
 
Let $M$ be a nonempty set and let $f\colon M \to M$ be a map.  We say that $f$ \emph{respects} $n$-ary relation $\rel \subseteq M^n$ if{}f $(a_1,\dots, a_n)\in \rel \iff (f(a_1),\dots, f(a_n))\in\rel$.

When we talk about \emph{concepts} of model $\Model$, we mean the
relations definable on its universe $M$. We say that an $n$-ary
relation $\rel$ on $M$ is \emph{definable} in $\Model$ if{}f there is
a first-order formula $\varphi(\fv{v}_1,\ldots,\fv{v}_n)$ in the
language of $\Model$ with free variables among
$\fv{v}_1,\ldots,\fv{v}_n$ that defines it; \ie for every
$(a_1,\ldots,a_n)\in M^n$, we have
\begin{equation}\label{eq:def-def}
(a_1,\ldots,a_n) \in \rel \quad\Longleftrightarrow\quad
\Model\models\varphi[a_1,\ldots,a_n].
\end{equation}
The set of all concepts of $\Model$ (\ie the set of all relations
definable in $\Model$) is denoted by $\ca{\Model}$.  Models $\Model$
and $\Nodel$ are \emph{definitionally equivalent} if{}f they have the
same concepts, \ie $\ca{\Model}=\ca{\Nodel}$. This notion of
definitional equivalence is just a reformulation of the usual one that
fits best for this paper, see \eg \cite[p.51]{HMT71} or
\cite[p.453]{Monk2000} for the standard definition.

Let $\field=\la \F,+,\cdot,0,1,\leq\ra$ be an ordered field.%
\footnote{That $\la\F,+,\cdot,0,1,\leq\ra$ is an ordered field means
that $\la\F,+,\cdot,0,1\ra$ is a field which is totally ordered by
$\leq$, and the following two properties holds for all ${x},{y},{z}\in
\F$: (i) ${x}+{z}\leq {y}+{z}$ if ${x}\leq {y}$, and (ii) $0\leq
  {x}{y}$ if $0\leq {x}$ and $0\leq {y}$.}  Let us fix a natural
number $d\ge 2$ for the dimension.  We will denote points in $F^d$
($d$-tuples over $F$) using arrows and angle-brackets. Given a
$d$-tuple $\vec{p}\in F^d$, we denote it's $i$'th component by $p_i$,
\ie $\vec{p} = \la p_1, \dots, p_d \ra$ by this convention.  We write
$\origin$ to denote the \emph{origin} $\laa 0, \dots, 0 \raa \in F^d$,
and $\unit_1,\ldots,\unit_d$ to denote the \emph{unit vectors} $\laa
1,0,\ldots,0\raa, \ldots, \laa 0,\ldots,0,1\raa \in F^d$,
respectively.  The ternary relation $\Bw$ of \emph{betweenness} on
$\F^d$ is defined for every $\vec{p},\vec{q},\vec{r}\in\F^d$ as
\[
\Bw(\vec{p},\vec{q},\vec{r}\,)\defiff 
\vec{q}=\vec{p}+\lambda(\vec{r}-\vec{p}\,)\ \text{ for some }\ \lambda\in [0,1] \subseteq \F.
\]
A model $\geometry$ is called a ($d$-dimensional) \emph{coordinate
{geometry} over ordered field $\field$} if{}f the following conditions
are satisfied:
\begin{itemize}
\item the universe of $\geometry$ is $\F^d$ (called the set of points);
\item $\geometry$ contains no functions or constants;
\item
the ternary relation $\Bw$ of betweenness on points is definable in
$\geometry$.
\end{itemize}

There is a natural correspondence between $n$-tuples of points in
$F^d$ ($n$-tuples of ($d$-tuples over $F$)) and $(dn)$-tuples over
$F$.  We say that an $n$-ary relation $\rel\subseteq
\left(F^d\right)^n$ is \emph{definable over $\field$} if{}f the
corresponding relation $\widehat{\rel}\subseteq F^{dn}$ is definable
in $\field$. For example, ternary relation $\Bw$ on points is
definable over $\field$ in this sense.

We call coordinate geometry $\geometry$ \emph{field-definable} if{}f all the relations of $\geometry$ are definable over $\field$; and we call $\geometry$ \emph{finitely field-definable} if{}f it is definable and it contains only finitely many relations.

Model $\la \F^d,\Bw\ra$ is called $d$-dimensional \emph{ordered affine geometry} over ordered field $\field$. This definition of ordered affine geometry is exactly what is called affine Cartesian space in \cite[Def.1.5.]{SzT79}.

A map $A\colon\F^d\rightarrow\F^d$ is called an \emph{affine
transformation} if{}f it is the composition of a linear bijection and
a translation.  By \emph{affine automorphisms} of coordinate geometry
$\geometry$ over $\field$, we mean affine transformations that are
also automorphisms of $\geometry$. We denote the set of all
automorphisms of $\geometry$ by $\aut{\geometry}$ and the set of all
affine automorphisms by $\AffAut{\geometry}$.

The following theorem and corollary from \cite{part1} will be used
here:
\begin{theorem}
\label{re-thm-erlangen}
Assume that  $\geometry$ and $\geometry'$ are
finitely field-definable coordinate geometries over
$\field$. Then:
\begin{enumerate}[(i)]
\item
$\ca{\geometry}\subseteq\ca{\geometry'}\ \Longleftrightarrow\
\aut{\geometry}\supseteq\aut{\geometry'}$.
\item
\label{Er2}
$\ca{\geometry}\subseteq\ca{\geometry'}\ \Longleftrightarrow\
 \AffAut{\geometry}\supseteq
\AffAut{\geometry'}$. 
\end{enumerate}
\end{theorem}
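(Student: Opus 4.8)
The plan is to reduce both equivalences to the characterization of concepts established in Part~1 \cite[Thm.5.1.2]{part1}, which states that for a finitely field-definable coordinate geometry $\geometry$ over $\field$, a relation $\rel$ belongs to $\ca{\geometry}$ if and only if $\rel$ is definable over $\field$ and respected by every automorphism in $\aut{\geometry}$ (equivalently, by every affine automorphism in $\AffAut{\geometry}$). Since (ii) is obtained from (i) by systematically replacing $\aut{\cdot}$ with $\AffAut{\cdot}$ and invoking the affine form of that characterization, I would prove (i) in full and then observe that (ii) runs along identical lines, using the fact that every affine automorphism is in particular an automorphism, so that $\AffAut{\geometry'}\subseteq\aut{\geometry'}$ and the same bookkeeping applies verbatim.

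For the forward direction of (i), assume $\ca{\geometry}\subseteq\ca{\geometry'}$ and take any $f\in\aut{\geometry'}$. The standard fact I would invoke here is that an automorphism of a model respects every (parameter-free) relation definable in that model; hence $f$ respects every concept in $\ca{\geometry'}$. Given any basic relation $\rel$ of $\geometry$, we have $\rel\in\ca{\geometry}\subseteq\ca{\geometry'}$, so $f$ respects $\rel$. As $f$ respects all the basic relations of $\geometry$ and is a bijection of the common universe $\F^d$ — and $\geometry$ carries no functions or constants — it is an automorphism of $\geometry$. This gives $\aut{\geometry'}\subseteq\aut{\geometry}$, which is exactly $\aut{\geometry}\supseteq\aut{\geometry'}$.

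The converse is where the real content sits, and it is precisely what the Part~1 characterization supplies. Assuming $\aut{\geometry}\supseteq\aut{\geometry'}$, I would take an arbitrary $\rel\in\ca{\geometry}$. By \cite[Thm.5.1.2]{part1} applied to $\geometry$, the relation $\rel$ is definable over $\field$ and respected by every $f\in\aut{\geometry}$; since $\aut{\geometry'}\subseteq\aut{\geometry}$, it is in particular respected by every $f\in\aut{\geometry'}$. Applying the same characterization in the recognition direction to $\geometry'$ — $\rel$ being field-definable and respected by all of $\aut{\geometry'}$ — yields $\rel\in\ca{\geometry'}$, whence $\ca{\geometry}\subseteq\ca{\geometry'}$.

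The main obstacle, then, is not in the present argument but is entirely absorbed into the characterization theorem of Part~1: the claim that a field-definable relation respected by all (affine) automorphisms is genuinely first-order definable in the geometry. That is a Svenonius/Beth-flavoured definability result, substantially deeper than the elementary ``automorphisms preserve definable relations'' direction, and its proof (relying on the ultrapower and finiteness machinery developed in Part~1) carries the entire weight. Once it is in hand, Theorem~\ref{re-thm-erlangen} follows purely by the inclusion-chasing above, together with the trivial embedding $\AffAut{\geometry}\subseteq\aut{\geometry}$ needed to transfer (i) to (ii).
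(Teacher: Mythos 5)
Your proposal is correct and takes essentially the approach the paper intends: this theorem is not proved in the present paper but is imported from Part 1, where it rests on exactly the characterization you invoke (\cite[Thm.5.1.2]{part1}: a relation is a concept of a finitely field-definable coordinate geometry if{}f it is field-definable and respected by all (affine) automorphisms). Your inclusion-chasing — the elementary ``automorphisms preserve definable relations'' direction for $\subseteq\Rightarrow\supseteq$, and the recognition direction of the characterization for the converse, transferred to (ii) via the affine form — is the same reduction, with the genuine depth correctly identified as residing in the Part 1 definability theorem rather than in the present statement.
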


\begin{corollary}
\label{re-cor-erlangen}
Assume that $\geometry$ and $\geometry'$ are finitely field-definable
coordinate geometries over $\field$. Then:
\begin{enumerate}[(i)]
\item  $\ca{\geometry}=\ca{\geometry'}\ \Longleftrightarrow\
\aut{\geometry}=\aut{\geometry'}$.
\item 
\label{ErCor2}
$\ca{\geometry}=\ca{\geometry'}\ \Longleftrightarrow\
\AffAut{\geometry}=\AffAut{\geometry'}$.
\item  $\ca{\geometry}\subsetneq\ca{\geometry'}\ \Longleftrightarrow\
\aut{\geometry}\supsetneq\aut{\geometry'}$.
\item $\ca{\geometry}\subsetneq\ca{\geometry'}\ \Longleftrightarrow\
\AffAut{\geometry}\supsetneq\AffAut{\geometry'}$. \hfill $\Box$
\end{enumerate}
\end{corollary}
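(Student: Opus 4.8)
The plan is to derive all four equivalences directly from Theorem~\ref{re-thm-erlangen}, treating equality as two-sided inclusion and proper inclusion as inclusion together with inequality. Since $\geometry$ and $\geometry'$ are finitely field-definable coordinate geometries over the same field $\field$, the hypotheses of the theorem are symmetric in the two geometries, so both directions of its inclusion biconditionals are available to us.

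For parts (i) and (ii), I would first observe that $\ca{\geometry}=\ca{\geometry'}$ holds exactly when both $\ca{\geometry}\subseteq\ca{\geometry'}$ and $\ca{\geometry'}\subseteq\ca{\geometry}$ hold, by antisymmetry of set inclusion. Applying Theorem~\ref{re-thm-erlangen}(i) to each inclusion separately (the second with the roles of $\geometry$ and $\geometry'$ interchanged) converts these into $\aut{\geometry}\supseteq\aut{\geometry'}$ and $\aut{\geometry}\subseteq\aut{\geometry'}$, whose conjunction is precisely $\aut{\geometry}=\aut{\geometry'}$. This yields (i); part (ii) is identical with affine automorphism groups in place of automorphism groups, using Theorem~\ref{re-thm-erlangen}(ii) in place of part (i).

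For parts (iii) and (iv), I would write $\ca{\geometry}\subsetneq\ca{\geometry'}$ as the conjunction of $\ca{\geometry}\subseteq\ca{\geometry'}$ and $\ca{\geometry}\neq\ca{\geometry'}$. The inclusion clause translates, via Theorem~\ref{re-thm-erlangen}(i), into $\aut{\geometry}\supseteq\aut{\geometry'}$, while the inequality clause translates, via part (i) just established, into $\aut{\geometry}\neq\aut{\geometry'}$; together these give $\aut{\geometry}\supsetneq\aut{\geometry'}$. The converse direction runs the same chain of equivalences backwards. Part (iv) is the same argument with affine automorphism groups, invoking Theorem~\ref{re-thm-erlangen}(ii) and part (ii).

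I do not expect any genuine obstacle here: the mathematical content is entirely carried by Theorem~\ref{re-thm-erlangen}, and the corollary is a purely formal consequence obtained by combining its two inclusion directions. The only point requiring a moment's care is that the strict-inclusion parts (iii)--(iv) should reuse the already-proved equality parts (i)--(ii) to handle the inequality clause, rather than trying to characterize inequality of concept-sets from scratch.
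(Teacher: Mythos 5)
Your proposal is correct and is exactly the intended derivation: the paper treats this corollary as an immediate formal consequence of Theorem~\ref{re-thm-erlangen} (hence the bare $\Box$), obtained by splitting equality into two inclusions and proper inclusion into inclusion plus inequality, just as you do. Your observation that parts (iii)--(iv) should reuse parts (i)--(ii) for the inequality clause is the right way to organize it, and nothing further is needed.
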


There is an ongoing research on various criterions to determine when a
mathematical object has more structure than another one, see \eg
\cite{North09,SwansonHalvorson12,Barrett14,Wilhelm21,Barrett22,BMW23}. One
of the criterions in that research is (SYM*), which is formulated in
\cite{SwansonHalvorson12} as: ``If $Aut(X)$ properly contains
$Aut(Y)$, then $X$ has less structure than $Y$.'' By
Corollary~\ref{re-cor-erlangen}, (SYM*) works perfectly for finitely
field-definable coordinate geometries if `structure' is understood as
the set of explicitly definable relations.

\section{Examples and Applications}
\label{sec:examples-and-applications}

The power of this approach shows itself, for example, in our proof of
Theorem~\ref{autgroups}\eqref{Beq3} below that affine automorphisms
respect lightlike relatedness if and only if they respect Minkowski
equidistance, which implies that these two concepts are interdefinable
(using betweenness). Finding the explicit formula that defines
Minkowski equidistance from lightlike relatedness and betweenness is
not trivial, whereas comparing the relevant affine automorphism groups
is relatively straightforward.

In the remainder of this section, we show how this strategy can be
used to demonstrate the conceptual relationships between various
historically significant geometries (see Figure~\ref{fig:hasse}),
thereby establishing which of these geometries are and are not
definitionally equivalent to one another. The main geometries
considered here are Ordered Affine ($\OAff$) and Euclidean ($\euclg$)
geometries, and Relativistic ($\relst$), Minkowski ($\minkst$),
Galilean ($\galst$), Newtonian ($\newtst$) and Late Classical
($\Lclst$) spacetimes.

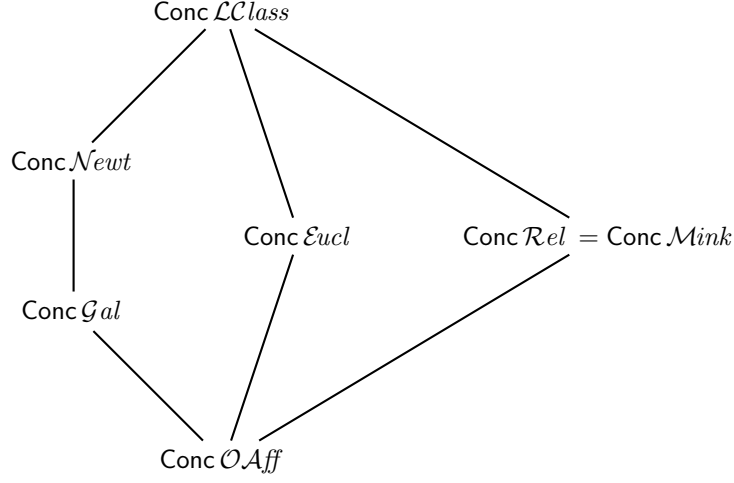
\begin{figure}
  \begin{tikzpicture}[scale=1]
    \tikzstyle{edge}=[thick]
    \node (LC) at (0,6){$\ca{\Lclst}$};
    \node (R) at (5,3){$\ca{\relst}=\ca{\minkst}$};
    \node (E) at (1,3){$\ca{\euclg}$};
    \node (G) at (-2,2){$\ca{\galst}$};
    \node (N) at (-2,4){$\ca{\newtst}$};
    \node (A) at (0,0) {$\ca{\OAff}$};

    \draw[edge] (A) to (G) to (N) to (LC);
    \draw[edge] (A) to (E) to (LC);
    \draw[edge] (A) to (R) to (LC);
  \end{tikzpicture}
  \caption{Hasse diagram showing how the concept-sets associated with
  various historically significant geometries are related to one another by subset inclusion.}
  \label{fig:hasse}
\end{figure}

\subsection{Assumptions}
Throughout, we restrict attention to $d$-dimensional coordinate
geometries over an ordered field $\field=\la \F,+,\cdot,0,1,\leq \ra$
where, as always, we assume that $d \geq 2$.

\begin{definition}[Products, lengths and distances]
A function $\otimes\colon F^d \times F^d \to F$ 
will be called a \emph{product} on $F^d$ if{f} 
it is both symmetric and bilinear.
If $\otimes$ is a product on $F^d$, its
corresponding \emph{squared length} 
function is the function 
$\sqLen{\vec{p}} \defeq \vec{p}\otimes\vec{p}$ 
and its corresponding 
\emph{squared distance} function is the function $\sqDist{\vec{p}}{\vec{q}} \defeq \sqLen{\vec{p}-\vec{q}}$, for all $\vec{p}, \vec{q} \in F^d$. 
\end{definition}

In particular, the Euclidean
 \emph{scalar product} $\ScalProd$  of vectors 
$\vec{p}, \vec{q} \in	\F^d$ is defined, as usual, by
\[
\vec{p}\ScalProd\vec{q} \defeq p_1q_1+\ldots+p_dq_d,
\]
and their \emph{Minkowski  product} ($\MinkProd$) 
by
\[ 
\vec{p}\MinkProd \vec{q} \defeq p_1q_1-p_2q_2-\ldots-p_dq_d.
\]

The squared distance functions corresponding to the Euclidean and Minkowski products 
will be denoted by $\mathsf{d}^2$ and $\mathsf{d}_\mu^2$, respectively.

\subsection{Unit-free spacetimes}

Thinking of $F^d$ as the set of spacetime points, we regard the first
component, $p_1$, of any vector $\vec{p} = \langle p_1,p_2,\dots,p_d
\rangle$ as its time component, and the rest, $(p_2,\dots,p_d)$ as its
spatial component. This is reflected in the following terminology.

On the set of points $\F^d$, the binary relations $\abstime$ of
\emph{absolute simultaneity}, $\rest$ of \emph{being at rest} and
$\llcon$ of \emph{lightlike relatedness} are defined for $\vec{p},
\vec{q} \in \F^d$ by:
\begin{align*}
\vec{p}\,\abstime\,\vec{q} &\ \defiff\  p_1=q_1,\\
\vec{p}\,\rest\,\vec{q} &\ \defiff\  ( p_2,\ldots,p_d) = (q_2,\ldots,q_d), \\
\vec{p}\,\llcon\,\vec{q} & \ \defiff\  (p_1-q_1)^2=(p_2-q_2)^2+\ldots+(p_d-q_d)^2.
\end{align*}
We note that
\begin{equation}
\label{llcon-alt}
    \vec{p}\,\llcon\,\vec{q} \quad\Longleftrightarrow\quad  \sqMd{\vec{p}}{\vec{q}}=0 .
\end{equation}

On the set of points
$\F^d$,
the $4$-ary relations $\cng{}$ of \emph{(Euclidean) congruence},
$\cng{\mu}$ of \emph{Minkowski congruence}, and $\cng{\abstime}$ of
\emph{congruence on simultaneity} are defined for
$\vec{p},\vec{q},\vec{r},\vec{s}\in\F^d$ by:
\begin{align*}
(\vec{p},\vec{q}\,)\,\cng{}\,(\vec{r},\vec{s}\,) & \defiff  
\sqEd{\vec{p}}{\vec{q}}=\sqEd{\vec{r}}{\vec{s}}, \\
(\vec{p},\vec{q}\,)\,\cng{\mu}\,(\vec{r},\vec{s}\,) & \defiff  
\sqMd{\vec{p}}{\vec{q}}=\sqMd{\vec{r}}{\vec{s}}, \\
(\vec{p},\vec{q}\,)\,\cng{\abstime}\,(\vec{r},\vec{s}\,)
& \defiff  (\vec{p},\vec{q}\,)\,\cng{}\,(\vec{r},\vec{s}\,)\ \land\
\vec{p}\,\abstime\,\vec{q}\ \land\  \vec{r}\,\abstime\,\vec{s}
\end{align*}                                         

When $(\vec{p},\vec{q}\,)\,\cng{}\,(\vec{r},\vec{s}\,)$ holds, we say
that $(\vec{p},\vec{q}\,)$ and $(\vec{r},\vec{s}\,)$ are
\emph{congruent segments}, and when
$(\vec{p},\vec{q}\,)\,\cng{\abstime}\,(\vec{r},\vec{s}\,)$ holds, we
say that they are \emph{spatially congruent}.

Note that the relations $\llcon$, $\abstime$, $\rest$,
$\cng{}$, $\cng{\mu}$ and $\cng{\abstime}$ are all definable over 
$\field$.

We also define a ternary relation $\bm{\updelta}$ 
on $\F^d$ by
\[
\bm{\updelta}(\vec{p},\vec{q},\vec{r}\,)\ \defiff\
\vec{p}\,\abstime\,\vec{q}\ \land\ \vec{p}\,\llcon\,\vec{r}.
\]
Notice that $\la \F^d,\abstime,\llcon\ra$ and
$\la \F^d,\bm{\updelta}\ra$ are definitionally equivalent
\ie 
\begin{equation}
\label{deltaeq}
\ca{\la \F^d,\abstime,\llcon\ra} =
\ca{\la \F^d,\bm{\updelta}\ra}
\end{equation}
because $\bm{\updelta}$ is defined in terms
of $\abstime$ and $\llcon$, and it is not difficult to prove 
that
both $\abstime$ and $\llcon$ can be defined in terms of
$\bm{\updelta}$ by: 
$
\vec{p}\,\abstime\,\vec{q}\; \Leftrightarrow\;  \exists \vec{r}\,
\bm{\updelta}(\vec{p},\vec{q},\vec{r}\,)$ and
$\vec{p}\,\llcon\,\vec{r}\; \Leftrightarrow\;  
\exists \vec{q}\, \bm{\updelta}(\vec{p},\vec{q},\vec{r}\,)$.

All spacetimes and geometries considered in this paper are
unit-free.\footnote{By ``unit-free'' we mean that there is no specific
units fixed for measuring spatial or temporal distances.} We now
recall the definition of ($d$-dimensional) ordered affine geometry,
and introduce Euclidean geometry and, respectively, Relativistic,
Minkowski, Galilean, Newtonian and Late Classical\footnote{Late
classical spacetime is a geometry corresponding to the understanding
of classical kinematics during the time of the search for the
luminiferous ether before Einstein's introduction of Special
Relativity in 1905, cf.\ also \cite{diss,ClassRelKin} for an
alternative approach to this kinematics.}  spacetimes over $\field$:
\begin{align*}
\OAff   &\ \defeq \ \la \F^d, \Bw \ra, \\
\euclg  &\ \defeq \ \la \F^d,\cng{} ,\Bw\ra,\\
\relst  &\ \defeq \ \la \F^d,\llcon,\Bw \ra,\\
\minkst &\ \defeq \ \la \F^d,\cng{\mu},\Bw\ra,\\ 
\galst  &\ \defeq \ \la \F^d,\cng{\abstime},\Bw\ra,\\
\newtst &\ \defeq \ \la \galst,\rest \ra,\\
 \Lclst &\ \defeq \ \la \galst, \llcon \ra
 \end{align*}
and note that these are all finitely field-definable coordinate
geometries over $\field$.

\begin{remark}
If
 $\field$ is a \emph{Euclidean}
 field 
(\ie one in which positive elements have 
square roots), then
 $\Bw$ is definable in the
$\Bw$-free reduct $\la \F^d,\cng{}\ra$
of $\euclg :=\la\F^d,\cng{},\Bw\ra$; if $d\geq 3$
and $\field$ is Euclidean, then
$\Bw$ is also definable in the $\Bw$-free reducts
of $\relst$, $\minkst$ and $\Lclst$.
On the other hand,
 $\Bw$ is not definable in the $\Bw$-free
reducts of $\galst$ and $\newtst$ for any $d \geq 2$.
\end{remark}

Given the close historical relationship between Relativistic spacetime
and Minkowski spacetime, one would naturally expect to find, as shown
in Fig.~\ref{tartalmazas-fig}, that $\minkst$ and $\relst$ are
conceptually identical. Both this relationship and the other subset
relationships shown in Fig.~\ref{tartalmazas-fig} are summarised in
the following theorem.

\begin{theorem}
\label{leiras}
The subset relations between
 $\ca{\relst}$, $\ca{\minkst}$, 
$\ca{\euclg}$, $\ca{\galst}$, $\ca{\newtst}$,
$\ca{\Lclst}$  and $\ca{\OAff}$  are as  shown  
in Fig.~\ref{tartalmazas-fig}. 
In particular, the concepts  $\abstime$, $\rest$,
$\llcon$, $\cng{}$, $\cng{\abstime}$,  $\cng{\mu}$ and 
$\bm{\updelta}$
allow us to distinguish between different geometries, as shown
explicitly in Table~\ref{tab:concept-locations}.  
In more detail:
\begin{enumerate}[(i)]
\item
\label{Le1}
$\ca{\relst}=\ca{\minkst}$, \ie $\relst$ and $\minkst$ are
definitionally equivalent.
\item  
\label{Le2}
$\ca{\galst}\subseteq\ca{\newtst}$.
\item 
\label{Le3}
$\ca{\OAff}\subseteq 
\left(\ca{\euclg}\cap\ca{\relst}\cap\ca{\galst}\right)$.
\item
\label{Le4}
$\left(\ca{\euclg}\cup\ca{\relst}\cup
\ca{\newtst}\right)\subseteq
\ca{\Lclst}$.
\item 
\label{Le5}
Table~\ref{tab:concept-locations} correctly identifies which of the 
cited geometries do and do not contain the concepts $\abstime$, $\rest$, 
$\llcon$, $\cng{}$, $\cng{\abstime}$,  $\cng{\mu}$ and $\bm{\updelta}$.
\hfill $\Box$
\end{enumerate}
\end{theorem}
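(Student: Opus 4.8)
The plan is to invoke Theorem~\ref{re-thm-erlangen}\eqref{Er2} and Corollary~\ref{re-cor-erlangen}\eqref{ErCor2} to replace every claimed relation between concept-sets with the corresponding relation between affine automorphism groups, and then to determine those groups explicitly. Since $\OAff$, $\euclg$, $\relst$, $\minkst$, $\galst$, $\newtst$ and $\Lclst$ are all finitely field-definable coordinate geometries over the same $\field$ of the same dimension $d$, part~\eqref{Er2} applies throughout, giving $\ca{\geometry}\subseteq\ca{\geometry'}\iff\AffAut{\geometry}\supseteq\AffAut{\geometry'}$, while the equality in \eqref{Le1} reduces to $\AffAut{\relst}=\AffAut{\minkst}$ by the corollary. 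Whenever one geometry is a reduct of another, the larger language can only shrink the automorphism group, so the corresponding concept-set inclusion is immediate. This already settles \eqref{Le2} (as $\galst$ is a reduct of $\newtst$), \eqref{Le3} (as $\OAff$ is a reduct of each of $\euclg$, $\relst$, $\galst$), and the inclusion $\ca{\relst}\subseteq\ca{\Lclst}$ in \eqref{Le4} (as $\relst$ is a reduct of $\Lclst$).

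The remaining inclusions in \eqref{Le4}, namely $\ca{\euclg}\subseteq\ca{\Lclst}$ and $\ca{\newtst}\subseteq\ca{\Lclst}$, require pinning down $\AffAut{\Lclst}$. First I would describe $\AffAut{\galst}$: writing an affine map as $\vec{x}\mapsto L\vec{x}+\vec{b}$, respecting $\abstime$ forces the time coordinate to transform as $p_1\mapsto\lambda p_1+b_1$, and respecting $\cng{\abstime}$ forces the spatial block to act as a scaled orthogonal map $\mu R$ together with a time-dependent boost. Imposing in addition that such a map respect $\llcon$ — that it preserve the light cone $(\Delta p_1)^2=(\Delta p_2)^2+\ldots+(\Delta p_d)^2$ — is then a short computation that kills the boost and locks the two scale factors by $\lambda^2=\mu^2$. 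With $\AffAut{\Lclst}$ thus reduced to (scaled) spatial rotations, a time rescaling of equal magnitude, and translations, both inclusions follow by direct check: the locking $\lambda^2=\mu^2$ makes the full Euclidean squared distance scale uniformly (so $\cng{}$ is respected, giving $\AffAut{\Lclst}\subseteq\AffAut{\euclg}$), and the absence of a boost makes $\rest$ be respected (giving $\AffAut{\Lclst}\subseteq\AffAut{\newtst}$).

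For part~\eqref{Le1}, the plan is to show $\AffAut{\relst}=\AffAut{\minkst}$. The inclusion $\AffAut{\minkst}\subseteq\AffAut{\relst}$ is easy, since respecting $\cng{\mu}$ forces respecting its zero-level relation $\llcon$ by \eqref{llcon-alt}. The converse is the crux: an affine map respecting $\llcon$ has linear part $L$ mapping the Minkowski null cone onto itself, and the key lemma to establish is that any such $L$ must scale the Minkowski form, $L\vec{x}\MinkProd L\vec{x}=\kappa\,(\vec{x}\MinkProd\vec{x}\,)$ for some nonzero $\kappa$ — whence the map also respects $\cng{\mu}$. This is the main obstacle, the one genuinely substantive linear-algebra step; I would prove it by passing to null coordinates, in which the cone becomes a union of coordinate null directions, and checking that preserving the cone forces the form to rescale, treating the degenerate case $d=2$ (where the cone collapses to two lines) separately from $d\geq 3$.

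Finally, part~\eqref{Le5} becomes a finite bookkeeping exercise once the groups are in hand: by the underlying definability criterion, a field-definable relation is a concept of $\geometry$ exactly when it is respected by $\AffAut{\geometry}$, so each entry of Table~\ref{tab:concept-locations} is verified either by checking that every affine automorphism of the relevant geometry respects the concept, or by exhibiting a single affine automorphism that violates it — for instance a Lorentz boost witnesses $\cng{}\notin\ca{\relst}$, a Galilean boost witnesses $\rest\notin\ca{\galst}$, and an independent time rescaling witnesses $\llcon\notin\ca{\galst}$. The definability of $\bm{\updelta}$ in $\Lclst$ follows from \eqref{deltaeq} together with the definability of $\abstime$ from $\cng{\abstime}$ (via $\vec{p}\,\abstime\,\vec{q}\iff(\vec{p},\vec{q}\,)\,\cng{\abstime}\,(\vec{p},\vec{q}\,)$) and of $\llcon$ as a basic relation of $\Lclst$.
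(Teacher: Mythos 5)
Your overall architecture matches the paper's: both proofs reduce every concept-set comparison to a comparison of affine automorphism groups via Theorem~\ref{re-thm-erlangen}\eqref{Er2} and Corollary~\ref{re-cor-erlangen}\eqref{ErCor2}, dispose of \eqref{Le2}, \eqref{Le3} and $\ca{\relst}\subseteq\ca{\Lclst}$ by reduct arguments, and settle Table~\ref{tab:concept-locations} by exhibiting witness automorphisms (your Lorentz boost, Galilean boost and time rescaling are exactly the paper's maps $P$, $G$ and $N$; note that a complete proof of \eqref{Le5} needs such a witness for \emph{every} red cell, e.g.\ a Euclidean rotation for the $\euclg$ column, as the paper does column by column). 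The difference is packaging: the paper first proves Theorem~\ref{autgroups}, identifying each affine automorphism group with a similarity group, and Lemma~\ref{nevetkelladni-prop}, computing the intersections of those groups, whereas you re-derive the needed group facts inline by block-matrix computations. Your computation of $\AffAut{\Lclst}$ (killing the boost, locking the temporal and spatial scales) is in substance Theorem~\ref{autgroups}\eqref{Beq4}--\eqref{Beq6} plus Lemma~\ref{nevetkelladni-prop}\eqref{Ai6}--\eqref{Ai7}. One caveat there: writing the spatial block as a scaled orthogonal map $\mu R$ tacitly assumes positive elements of $\field$ have square roots; over a general ordered field (the paper's setting) this decomposition can fail --- the paper's example $L(t,x)=(t+x,t-x)$ is a Euclidean similarity with square-factor $2$ admitting no such factorization over $\rationalnumb$ --- so the argument must be phrased in terms of square-factors $a$ rather than scale factors $\mu$. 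This is repairable: your boost-killing and scale-locking computations go through with $a$ in place of $\mu^2$.

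The genuine gap is in your key lemma for \eqref{Le1}, the inclusion $\AffAut{\relst}\subseteq\PoiSim$, which you rightly identify as the crux. Your plan --- ``pass to null coordinates, in which the cone becomes a union of coordinate null directions'' --- only makes sense when $d=2$: there, in coordinates $u=p_1+p_2$, $v=p_1-p_2$, the cone really is the union of the two lines $u=0$ and $v=0$, and any linear bijection preserving that union scales $uv$. For $d\geq 3$ the null cone is an irreducible quadric; it is not a union of linear subspaces in any coordinate system, so the step as described fails in exactly the case it is needed for. The statement you are after is true (a linear bijection preserving the null cone of the Minkowski form rescales the form), but it requires an actual argument, and knowing that $L$ sends a basis of null vectors to null vectors is \emph{not} sufficient --- you must exploit null vectors outside any fixed basis. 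The paper's argument is elementary and uniform in $d\geq 2$: from $\unit_1\,\llcon\,\unit_i$ and $\unit_1\,\llcon\,(-\unit_i)$ it deduces $L(\unit_1)\MinkProd L(\unit_i)=0$ and $L(\unit_1)\MinkProd L(\unit_1)=-\left(L(\unit_i)\MinkProd L(\unit_i)\right)$, and then, for distinct $i,j\geq 2$, it uses the extra null relation $\left(\tfrac{3}{5}\unit_i+\tfrac{4}{5}\unit_j\right)\llcon\,\unit_1$ (available over any ordered field since $\rationalnumb\subseteq\F$) to force $L(\unit_i)\MinkProd L(\unit_j)=0$; Lemma~\ref{PoiSim-lemma} then yields $L\in\PoiSim$. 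Substituting an argument of this kind for your $d\geq 3$ step closes the gap.
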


\begin{figure}
\begin{tikzpicture}[scale=0.57]
    \tikzstyle{vonal}=[ultra thick]
    \tikzstyle{bogyo}=[fill, circle, inner sep=1.5]

    \draw[vonal,dashed] (1,.4) arc [start angle=0, end angle=360, x radius=0.9cm, y radius=0.9cm]  node [pos=.65, below, rotate=-35] {$\scriptstyle\ca{\OAff}$};
    \draw[vonal] (10,0) arc [start angle=0, end angle=360, x radius=10cm, y
radius=6.5cm] node [pos=.2,above right,rotate=-13] {$\ca{\Lclst}$};

    \draw[vonal,green!31!black] (9,0) arc [start angle=0, end angle=360, x radius=6cm, y
radius=3.2cm] node [pos=.2,above,black,rotate=-10] {$\ca{\newtst}$};

    \draw[vonal,green!42!black] (6,0) arc [start angle=0, end angle=360, x radius=4cm, y radius=2cm] node [pos=.2,above,black,rotate=-5] {$\ca{\galst}$};

    \draw[vonal,blue!63!black,rotate=150] (6,0) arc [start angle=0, end angle=360, x radius=4cm, y radius=2cm] node [pos=.95,above,black,rotate=20] {$\ca{\euclg}$};

    \draw[rotate=-155,vonal,red!63!black] (6,0) arc [start angle=0, end angle=360, x radius=4cm, y radius=2cm] node [pos=.08,below,black,rotate=-15] {$\ca{\minkst}=\ca{\relst}$};

    \node[bogyo, label={60:$\cng{}$} ] (cg) at (-3.8,2){};
    \node[bogyo, label={60:$\bm{\updelta}$} ] (d) at (-7,1){};
    \node[bogyo, label={60:$\abstime$} ] (S) at (3,0.7){};
    \node[bogyo, label={60:$\Bw$} ] (bw) at (-0.2,0.1){};
    \node[bogyo, label={60:$\rest$} ] (R) at (7,0){};
    \node[bogyo, label={60:$\cng{\abstime}$} ] (cgS) at (3.3,-1){};
    \node[bogyo, label={60:$\llcon$} ] (l) at (-4,-1.5){};
    \node[bogyo, label={60:$\cng{\mu}$} ] (m) at (-3,-2.7){};
  \end{tikzpicture}

\caption{Venn diagram showing how the concept-sets associated with various
geometries are related to one another. The diagram shows which geometries
do and do not contain various key concepts; see 
Table~\ref{tab:concept-locations}.}
\label{tartalmazas-fig} 
\end{figure}
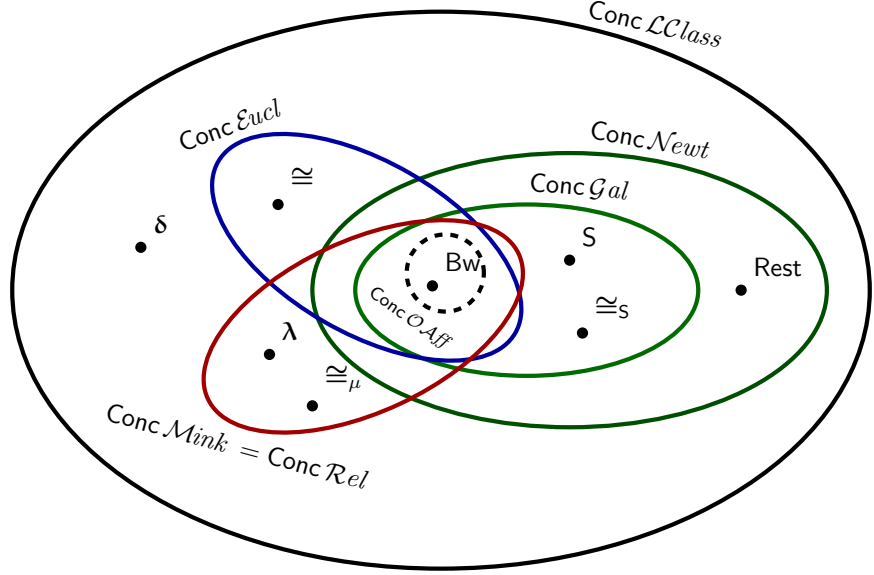

\begin{table}[ht]
\caption{The geometries shown in Fig.~\ref{tartalmazas-fig} which
 do and do not contain
the concepts $\abstime$, $\rest$, $\llcon$, $\cng{}$, $\cng{\abstime}$,  
$\cng{\mu}$ and $\bm{\updelta}$. This information confirms which
geometries are definitionally distinct from one another.}
\label{tab:concept-locations}
\begin{tabular}{c |  c | c | c | c | c }
               &  $\ca{\euclg}$  & $\ca{\relst}$ & 
$\ca{\galst}$ & $\ca{\newtst}$ & $\ca {\Lclst}$\\
\hline
$\cng{}$  &\cellcolor{green!42}  $\bm{\in}$ &\cellcolor{red!42} $\not\in$ &\cellcolor{red!42} $\not\in$ &\cellcolor{red!42} $\not\in$ & \cellcolor{green!42}
$\bm{\in}$\\
\hline
$\cng{\mu} $ &  \cellcolor{red!42} $\not\in$ &\cellcolor{green!42} $\bm{\in}$  & \cellcolor{red!42} $\not\in$ &\cellcolor{red!42} $\not\in$ &
\cellcolor{green!42} $\bm{\in}$\\
\hline
$\llcon $ & \cellcolor{red!42} $\not\in$ &\cellcolor{green!42} $\bm{\in}$ &\cellcolor{red!42} $\not\in$  & \cellcolor{red!42} $\not\in$ &
\cellcolor{green!42} $\bm{\in}$\\
\hline
$\cng{\abstime}$ & \cellcolor{red!42} $\not\in$  & \cellcolor{red!42} $\not\in$ &\cellcolor{green!42} $\bm{\in}$  &\cellcolor{green!42}
$\bm{\in}$ &\cellcolor{green!42} $\bm{\in}$\\
\hline
$\abstime$ & \cellcolor{red!42} $\not\in$  &  \cellcolor{red!42} $\not\in$ &\cellcolor{green!42} $\bm{\in}$ &\cellcolor{green!42} $\bm{\in}$ & \cellcolor{green!42}
$\bm{\in}$\\
\hline
$\rest$ & \cellcolor{red!42} $\not\in$ & \cellcolor{red!42} $\not\in$  & \cellcolor{red!42} $\not\in$ &\cellcolor{green!42} $\bm{\in}$ & \cellcolor{green!42}
$\bm{\in}$\\
\hline
$\bm{\updelta}$ & \cellcolor{red!42} $\not\in$ & \cellcolor{red!42} $\not\in$ &   \cellcolor{red!42} $\not\in$ & 
\cellcolor{red!42} $\not\in$ &\cellcolor{green!42} $\bm{\in}$\\
\hline
\end{tabular}
\end{table}

The next theorem tells us that, with just a few exceptions, if we
expand any of the geometries we have been studying with any of the
concepts from $\{\cng{},\cng{\mu},\llcon, \cng{\abstime}, \abstime,
\rest, \bm{\updelta}\}$ that are not concepts of the geometry in
question, the result is definitionally equivalent to $\Lclst$.

\begin{theorem}
\label{leiras2}

Let $\geometry\in\{\relst, \euclg, \galst, \newtst\}$ and
$\rel\in\{\cng{},\cng{\mu},\llcon, \cng{\abstime}, \abstime, \rest,
\bm{\updelta}\}$ and suppose that $\rel\not\in\ca{\geometry}$ and
$\rel\neq\rest$ if $\geometry=\galst$.
Table~\ref{tab:concept-locations} shows the possible choices of
$\geometry$ and $\rel$. Then:
\begin{enumerate}[(i)]
\item 
\label{LL1}
If $d\geq 3$, then $\la \geometry,\rel\ra$ is definitionally
equivalent to $\Lclst$, \ie\\
$\ca {\la \geometry,\rel\ra}=\ca{\Lclst}$.
\item
\label{LL2}
If $d=2$ and  $\la \geometry,\rel\ra\not\in\{\la
\euclg,\llcon\ra,  \la \euclg,\cng{\mu}\ra, 
\la\relst,\cng{}\ra\}$, then
 $\la \geometry,\rel\ra$ is definitionally
equivalent to $\Lclst$, \ie
$\ca {\la \geometry,\rel\ra}$ $=$ $\ca{\Lclst}$.
\item 
\label{LL3}
If $d=2$, then $\la\relst,\cng{}\ra$,
$\la\euclg, \cng{\mu}\ra$ and  
$\la\euclg, \llcon\ra$ are definitionally
equivalent, 
but they are not definitionally equivalent
to $\Lclst$.
\item 
\label{LL4}
$\la \OAff,\bm{\updelta}\ra$ is definitionally
equivalent to $\Lclst$, \ie
$\ca{\la \OAff,\bm{\updelta}\ra}$ $=$ \\ $\ca{\Lclst}$. \hfill $\Box$
\end{enumerate}
\end{theorem}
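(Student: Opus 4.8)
The plan is to prove every part by comparing affine automorphism groups, invoking Corollary~\ref{re-cor-erlangen}\eqref{ErCor2}: for finitely field-definable coordinate geometries over $\field$, $\ca{\geometry}=\ca{\geometry'}$ iff $\AffAut{\geometry}=\AffAut{\geometry'}$. For each admissible pair $(\geometry,\rel)$ one inclusion is free: since $\ca{\geometry}\subseteq\ca{\Lclst}$ by Theorem~\ref{leiras}\eqref{Le4} and $\rel\in\ca{\Lclst}$ by Table~\ref{tab:concept-locations}, every relation of $\la\geometry,\rel\ra$ is a concept of $\Lclst$, so $\ca{\la\geometry,\rel\ra}\subseteq\ca{\Lclst}$, equivalently $\AffAut{\la\geometry,\rel\ra}\supseteq\AffAut{\Lclst}$. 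Thus the whole content of \eqref{LL1}, \eqref{LL2} and \eqref{LL4} is the reverse inclusion $\AffAut{\la\geometry,\rel\ra}\subseteq\AffAut{\Lclst}$; that is, every affine automorphism respecting $\geometry$ and $\rel$ must also respect both generators $\cng{\abstime}$ and $\llcon$ of $\Lclst$.

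The engine is the explicit list of affine automorphism groups supplied by Theorem~\ref{autgroups}. I would record that $\AffAut{\euclg}$ consists of the Euclidean similarities $\vec x\mapsto cQ\vec x+\vec b$ ($Q$ orthogonal for $\ScalProd$), $\AffAut{\minkst}$ of the Minkowski similarities ($Q$ a Lorentz map for $\MinkProd$), $\AffAut{\galst}$ of the Galilean similarities (time scaled freely, space scaled by an orthogonal map, plus a boost), $\AffAut{\newtst}$ of the boost-free Galilean similarities, and $\AffAut{\Lclst}$ of the rigid maps $\vec x\mapsto cQ\vec x+\vec b$ where $Q$ acts as $\pm1$ on the time axis and orthogonally on space (a common space-time scaling). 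The decisive, dimension-sensitive input is the rigidity of $\llcon$: for $d\geq3$ respecting $\llcon$ forces a Minkowski similarity, so $\AffAut{\relst}=\AffAut{\minkst}$, whereas for $d=2$ the null set degenerates into two lines and $\AffAut{\relst}$ is strictly larger (in null coordinates it allows independent nonzero rescalings of, and the swap of, the two null axes).

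With these in hand, parts \eqref{LL1} and \eqref{LL2} become a finite case check, which I would compress by reductions. Cases with $\geometry=\newtst$ collapse to the corresponding $\galst$ case since $\AffAut{\newtst}\subseteq\AffAut{\galst}$. Cases with $\rel=\bm{\updelta}$ collapse to $\rel=\llcon$ using \eqref{deltaeq} (respecting $\bm{\updelta}$ means respecting $\abstime$ and $\llcon$) together with the fact that the ambient group already respects $\abstime$. Likewise $\rel=\cng{\mu}$ over $\galst$ collapses to $\llcon$ because Minkowski similarities respect $\llcon$. What remains are a handful of genuine intersections of two similarity groups; in each non-exceptional case the two quadratic constraints share only $\AffAut{\Lclst}$: for instance, an orthogonal map that also respects $\abstime$ (or $\rest$) must preserve both the time axis and the spatial hyperplane, hence splits into $\pm1$ on time and an orthogonal map on space. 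The main obstacle, and the reason for the $d=2$ caveats, sits in the Euclidean-versus-Minkowski intersections $\la\euclg,\cng{\mu}\ra$ and $\la\euclg,\llcon\ra$: a map can respect $\cng{\mu}$ while reversing the sign of $\MinkProd$ only when the signatures $(1,d-1)$ and $(d-1,1)$ coincide, i.e.\ only when $d=2$ (Sylvester's law of inertia). For $d\geq3$ this sign reversal is impossible, the two forms are simultaneously pinned, and the intersection is exactly $\AffAut{\Lclst}$; for $d=2$ it is not, which is precisely what makes these pairs exceptional.

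Part \eqref{LL3} then packages the $d=2$ exceptions. I would first note that $\la\relst,\cng{}\ra$ and $\la\euclg,\llcon\ra$ are literally the same model, hence trivially definitionally equivalent; then show $\AffAut{\la\euclg,\cng{\mu}\ra}=\AffAut{\la\euclg,\llcon\ra}$ in $d=2$ (both equal the group of Euclidean similarities preserving the two null lines, using $\AffAut{\minkst}\subseteq\AffAut{\relst}$ and checking the reverse for the orthogonal parts). Finally I would exhibit the time-space swap $\la x_1,x_2\ra\mapsto\la x_2,x_1\ra$: it is orthogonal, it negates $\MinkProd$ (hence respects $\cng{\mu}$ and $\llcon$), but it does not respect $\abstime$, so it lies in all three automorphism groups yet outside $\AffAut{\Lclst}$, witnessing the strict inclusion $\AffAut{\la\euclg,\cng{\mu}\ra}\supsetneq\AffAut{\Lclst}$ and so showing the three geometries are not equivalent to $\Lclst$. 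For part \eqref{LL4}, $\la\OAff,\bm{\updelta}\ra$ is by \eqref{deltaeq} interdefinable with $\la\F^d,\abstime,\llcon,\Bw\ra$, whose affine automorphisms respect $\abstime$ and $\llcon$; respecting $\abstime$ pins the time axis, and then the null-cone condition forces the rigid diagonal form in every dimension $d\geq2$ (the degeneracy that plagued $d=2$ above is neutralised precisely because $\abstime$ already breaks the time-space symmetry), giving $\AffAut{\la\OAff,\bm{\updelta}\ra}=\AffAut{\Lclst}$.
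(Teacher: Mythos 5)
Your overall architecture (reduce everything to automorphism-group comparisons via Corollary~\ref{re-cor-erlangen}\eqref{ErCor2}, get one inclusion for free from Theorem~\ref{leiras} and Table~\ref{tab:concept-locations}, and fight for the reverse inclusion case by case) is exactly the paper's strategy, and your treatment of parts \eqref{LL3} and \eqref{LL4} ends in the right place. But there is a genuine error at the heart of the execution: your ``decisive, dimension-sensitive input'', the claim that for $d=2$ the group $\AffAut{\relst}$ is strictly larger than $\AffAut{\minkst}$ because it contains independent rescalings and the swap of the two null axes. This is false, and it contradicts Theorem~\ref{autgroups}\eqref{Beq3} --- which the paper proves self-containedly precisely so that it covers $d=2$ --- as well as Theorem~\ref{leiras}\eqref{Le1}, neither of which carries any dimension restriction. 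A null-axis rescaling $(u,v)\mapsto(\alpha u,\beta v)$ multiplies every value $\sqMd{\vec{p}}{\vec{q}}$ by the same nonzero constant $\alpha\beta$, so it respects $\cng{\mu}$ and is a Poincar\'e similarity in the paper's sense (square-factors may be negative; cf.\ Remark~\ref{nonzero-rem}); hence it already lies in $\AffAut{\minkst}$. The true source of the $d=2$ exceptions is not any divergence between $\relst$ and $\minkst$, but the failure of Lemma~\ref{nevetkelladni-prop}\eqref{Ai5} in dimension two: $\EuclSim\cap\PoiSim=\TrivSim$ only for $d>2$, since for $d=2$ the time--space swap is a Euclidean isometry and a Poincar\'e similarity with square-factor $-1$ yet does not respect $\abstime$. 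Your Sylvester remark gestures at this, but the case analysis as organized rests on a false premise.

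Two further gaps. First, your descriptions of the automorphism groups as maps $\vec{x}\mapsto cQ\vec{x}+\vec{b}$ with $Q$ orthogonal (and of $\AffAut{\Lclst}$ as ``$\pm 1$ on time, orthogonal on space'') are valid only over Euclidean fields: the theorem is stated over an arbitrary ordered field $\field$, and the paper explicitly notes that over the rationals the Euclidean similarity $L(t,x)=(t+x,t-x)$ admits no such decomposition. All of your case checks run through this matrix picture, so as written they do not prove the theorem in the required generality; the paper instead characterizes similarities by the conditions $L(\unit_i)\otimes L(\unit_j)=a\cdot(\unit_i\otimes\unit_j)$ (Lemmas~\ref{EuclSim-lemma} and \ref{PoiSim-lemma}) and proves the needed intersection identities abstractly in Lemma~\ref{nevetkelladni-prop}. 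Second, your reduction ``$\bm{\updelta}$ collapses to $\llcon$'' is justified only when the ambient group respects $\abstime$, i.e.\ for $\galst$ and $\newtst$. For $\la\euclg,\bm{\updelta}\ra$ with $d=2$ it funnels you into the exceptional case $\la\euclg,\llcon\ra$ and thus proves nothing, even though this $\bm{\updelta}$-case is asserted by part \eqref{LL2}; for $\la\relst,\bm{\updelta}\ra$ the target case does not even exist, since $\llcon\in\ca{\relst}$. Both must instead be routed through $\abstime$, using $\EuclSim\cap\aut{\la\F^d,\abstime\ra}=\TrivSim$ and $\PoiSim\cap\aut{\la\F^d,\abstime\ra}=\TrivSim$ (Lemma~\ref{nevetkelladni-prop}\eqref{Ai4} and \eqref{Ai3}); the latter identity is a substantive fact that your proposal never establishes --- your only worked intersection example is the Euclidean one.
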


\subsection{Affine automorphisms and similarities} 

In this section, we show that the affine automorphisms of various
geometries can be characterised as similarity transformations. We
begin by defining the various types of similarities we will be using.

\begin{definition}[$\otimes$-similarities]
Suppose $A\colon\F^d\to\F^d$ is a  function and $\otimes$ is a
product on $F^d$. 
As a natural generalization of the notion to arbitrary 
ordered fields
  $\field$, we call $A$ 
a $\otimes$-\emph{similarity} 
if{}f it is an affine transformation
that respects squared $\otimes$-distances
up to scaling, \ie there is $a\in\F$ such that
\begin{equation}
\label{star-sim-eq}
\sqDist{A(\vec{p}\,)}{A(\vec{q}\,)}=a\cdot \sqDist{\vec{p}}{\vec{q}}
\quad\text{for every
  $\vec{p},\vec{q}\in\F^d$. }
 \end{equation}
The value $a$ is called the \emph{squared 
scale factor} (or \emph{square-factor}).
The set of all $\otimes$-similarities will be denoted by $\mathsf{Sim}_\otimes$.
\end{definition}

In particular, if $\otimes$ is the Euclidean scalar product, we call $A$ a
\emph{Euclidean similarity}, and if $\otimes$ is the Minkowski product, we call 
$A$ a \emph{Poincar\'e similarity}. 
When the square-factor 
$a$ is $1$, these similarities are traditionally called Euclidean and Poincar\'e \emph{transformations}.

\begin{remark}
\label{nonzero-rem}
Suppose $\otimes$ is a product. If there is a $\vec p$ for which $\vec
p \otimes \vec p \neq 0$, then the square-factor $a$ associated with
any $\otimes$-similarity $A$ must be non-zero.\footnote{ Because $A$
is a bijection, we can find some $\vec{q} \in F^d$ such that
$A(\vec{q}\,) = \vec{p}+A(\origin\,)$, and now \eqref{star-sim-eq}
yields $a \cdot \sqDist{\vec{q}}{\origin} =
\sqDist{A(\vec{q}\,)}{A(\origin\,)} =
\left(A(\vec{q}\,)-A(\origin\,)\right) \otimes
\left(A(\vec{q}\,)-A(\origin\,)\right) = \vec{p} \otimes \vec{p} \neq
0 $, \ie $a \neq 0$.}  In particular, the square-factor $a$ associated
with Euclidean and Poincar\'e similarities must be non-zero. In the
Euclidean case, $a$ is necessarily positive,\footnote{Because it can
be calculated from $a=\sqEd{A(\origin)}{A(\unit_1)}$, which is a
non-empty sum of squares.}  and if $d > 2$, $a$ must also be positive
in the Poincar\'e case.\footnote{ To see this, it is enough to show
that two non-zero `timelike' vectors cannot be Minkowski-orthogonal
(where a vector $\vec p$ is said to be `timelike' if $\vec p \MinkProd
\vec p > 0$). This latter follows easily from the Cauchy-Schwarz
inequality, and we omit the details.  }  However, this need not be the
case when $d=2$; for example, the transformation that interchanges
`time' and `space' in $F^2$ is a Poincar\'e similarity with $a = -1$.
\end{remark}

If $\field$ is a Euclidean field and $A$ a Euclidean similarity with
square-factor $a$, then $A$ is the composition of a Euclidean
transformation with the uniform scaling
$\vec{p}\mapsto\sqrt{a}\vec{p}$ with scale factor $\sqrt{a}$.  For
non-Euclidean fields this need not hold, \eg the map
$L(t,x)=(t+x,t-x)$ is a Euclidean similarity with square-factor 2
which cannot be decomposed in this way over the field of rationals.
We also note that, for any ordered field, the composition of a
Euclidean transformation with a uniform scaling of factor $\lambda
\neq 0$ is a Euclidean similarity with square-factor
$\lambda^2$. Analogous statements hold for Poincar\'e transformations
and similarities when $d>2$.

\begin{definition}[Galilean similarities]
A map $A\colon\F^d\rightarrow\F^d$ is called a 
\emph{Galilean similarity}
if{}f it is an affine transformation and
 there is $a\in \F$ such that
\begin{multline}
\label{gsim-eq}
\vec{p}\,\abstime\,\vec{q}\quad \Longrightarrow\\
A(\vec{p}\,)\,\abstime\, A(\vec{q}\,)\ \text{ and }\
\sqEd{A(\vec{p}\,)}{A(\vec{q}\,)}=a\cdot \sqEd{\vec{p}}
{\vec{q}}
\quad\text{for every
  $\vec{p},\vec{q}\in\F^d$. }
\end{multline}
We call $a$ in \eqref{gsim-eq} the
\emph{spatial square-factor}.
The affine transformation
$A$ is called a \emph{Galilean transformation}
if{}f it is 
a Galilean similarity with
spatial square-factor $1$ and in
addition, $A(\vec{p}\,)_1-A(\vec{q}\,)_1=p_1-q_1$ for all $\vec{p},\vec{q}\in\F^d$.
\end{definition}

Over the field of reals, the transformations that are called generalized Galilean transformations in \cite[p.51]{field80} are exactly the Galilean similarities that preserve orientation of time, \ie that have the property $A(\vec{p}\,)_1 > A(\vec{q}\,)_1$  if{}f  $p_1 > q_1$ for all $\vec{p},\vec{q}\in\F^d$.

The spatial square-factor associated with a Galilean similarity has to
be positive for the same reasons that the square-factor associated
with Euclidean similarities is positive.  If $\field$ is a Euclidean
field and $A$ is a Galilean similarity with spatial square-factor $a$,
then $A$ can be obtained as the composition of a Galilean
transformation, the spatial scaling $\vec{p}\mapsto\la
p_1,\sqrt{a}p_2, \ldots, \sqrt{a}p_d\ra$ and the temporal scaling
$\vec{p}\mapsto \la bp_1,p_2,\ldots,p_d\ra$ with (necessarily
non-zero) scale factor $b = A(\unit_1)_1-A(\origin)_1$.  
Conversely,
it is easy to see that any composition of a Galilean transformation
with non-zero spatial and temporal scalings is a Galilean similarity for arbitrary ordered field $\field$.

\begin{definition}[Trivial similarities]
A map is called a 
\emph{trivial Euclidean similarity} if{}f it
is a Euclidean similarity that respects $\rest$, and a 
\emph{trivial Galilean similarity} if{}f it is a 
Galilean similarity that respects $\rest$.\footnote{
We do not define trivial Poicar\'e similarities as well, because --
by Lemma~\ref{nevetkelladni-prop}\eqref{Ai3} --
the Poincar\'e similarities that
respect $\rest$ coincide with the trivial
Euclidean similarities we have already defined.
}
\end{definition}

Composition of a trivial Euclidean similarity with a non-zero temporal
scaling, or with a non-zero spatial scaling, is always a trivial
Galilean similarity.  It is also not difficult to prove, provided
$\field$ is Euclidean, that any trivial Galilean similarity $T$ can be
obtained as the composition of a trivial Euclidean similarity with a
non-zero temporal scaling, and as the composition of a (possibly
different) trivial Euclidean similarity with a non-zero spatial
scaling. Such a decomposition may not exist if $\field$ is not Euclidean.

The sets of Euclidean, Poincar\'e, Galilean, trivial Euclidean and
trivial Galilean similarities are denoted by $\EuclSim$, $\PoiSim$,
$\GenGal$, $\TrivSim$ and $\GenTriv$, respectively.

The following key theorem shows that these similarities are precisely
the affine automorphisms of their associated spacetimes. It is
precisely this result which makes it easy for us to check for
definitional equivalence between spacetimes. The proofs follow below
(Section \ref{sec:lemmas} onwards).

\begin{theorem}
\label{autgroups}
${}$
\begin{enumerate}[(i)]
\item $\AffAut{\euclg}=\EuclSim$.
\label{Beq1}
 \item
\label{Beq3}
 $\AffAut{\relst}=\AffAut{\minkst}=\PoiSim$. 
\item
\label{Beq4}
 $\AffAut{\galst}=\GenGal$.  
\item  
\label{Beq5}
$\AffAut{\newtst}=\GenTriv$.
\item 
\label{Beq6}
$\AffAut{\Lclst}=\TrivSim$.
\end{enumerate}
\end{theorem}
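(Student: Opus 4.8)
The plan is to prove each equality by establishing both inclusions, working throughout with the linear part of the map. Since translations respect every relation defined through coordinate differences ($\cng{}$, $\cng{\mu}$, $\cng{\abstime}$, $\abstime$, $\rest$, $\llcon$ and $\Bw$) and belong to each similarity class, and since every affine bijection automatically respects $\Bw$, I would compose any affine automorphism $A$ with the translation carrying $A(\origin\,)$ to $\origin$ and assume $A=L$ is linear. The easy inclusions $\mathrm{Sim}\subseteq\AffAut$ are then immediate: a Euclidean (resp.\ Poincar\'e, Galilean) similarity scales the relevant squared distance by a fixed factor $a$, so it sends equal distances to equal distances and hence respects $\cng{}$ (resp.\ $\cng{\mu}$, and on simultaneous pairs $\cng{\abstime}$); here $a\neq 0$ by Remark~\ref{nonzero-rem}, which also gives the converse direction of each ``iff''. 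A Poincar\'e similarity moreover preserves $\{\vec u:\sqMl{\vec u}=0\}$ and so respects $\llcon$, yielding $\PoiSim\subseteq\AffAut{\relst}\cap\AffAut{\minkst}$.

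For the converse inclusions the core is a single linear-algebra fact that I would isolate as a lemma: a linear bijection $L$ preserving the null cone of $\sqMl{\cdot}$ scales it, \ie $\sqMl{L\vec u}=a\,\sqMl{\vec u}$ for some $a\in\F$. Setting $Q'(\vec u)\defeq\sqMl{L\vec u}$ with symmetric bilinear form $B'$, one evaluates $Q'$ on explicit null vectors: from $\sqMl{\unit_1\pm\unit_i}=0$ one reads off $Q'(\unit_i)=-Q'(\unit_1)$ and $B'(\unit_1,\unit_i)=0$, and from a Pythagorean null vector such as $5\unit_1+3\unit_i+4\unit_j$ one obtains $B'(\unit_i,\unit_j)=0$ for distinct spatial indices (this needs $d\geq 3$, but for $d=2$ there is no such pair, so the conclusion holds for all $d\geq 2$). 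As ordered fields have characteristic $0$, this gives $Q'=a\,Q$ with $a=\sqMl{L\unit_1}\neq 0$ (non-zero since $L$ is onto and not every vector is null). Applying this to $\AffAut{\relst}$, where respecting $\llcon$ is exactly the null-cone hypothesis, and to $\AffAut{\minkst}$, where respecting $\cng{\mu}$ already forces the forms $\vec u\mapsto\sqMl{\vec u}$ and $\vec u\mapsto\sqMl{L\vec u}$ to have the same zeros, yields $\AffAut{\relst}=\AffAut{\minkst}=\PoiSim$, part~\eqref{Beq3}. The Euclidean form is definite, so its null cone is trivial and this lemma says nothing; part~\eqref{Beq1} is instead handled by the standard basis computation — the unit vectors are pairwise $\cng{}$-congruent, so their images share a squared length $a$, the pairs $\unit_i\pm\unit_j$ force $L\unit_i\ScalProd L\unit_j=0$, and expanding gives $\sqEl{L\vec u}=a\,\sqEl{\vec u}$, so $L\in\EuclSim$.

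For the Galilean case~\eqref{Beq4}, I would first extract preservation of simultaneity from reflexivity: every simultaneous pair satisfies $(\vec p,\vec q\,)\cng{\abstime}(\vec p,\vec q\,)$, so respecting $\cng{\abstime}$ forces $A\vec p\,\abstime\,A\vec q$, and applying this to $A^{-1}$ shows $L$ maps the spatial hyperplane $S=\{\vec u:u_1=0\}$ onto itself. On simultaneous pairs $\cng{\abstime}$ restricts to Euclidean congruence, so $L|_S$ respects $\cng{}$ on the $(d-1)$-dimensional space $S$, and the computation of~\eqref{Beq1} makes $L|_S$ a Euclidean similarity with some factor $a$; since every simultaneous difference lies in $S$, this is precisely the Galilean-similarity condition with spatial square-factor $a$. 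Parts~\eqref{Beq5} and~\eqref{Beq6} build on this, because $\newtst$ and $\Lclst$ are $\galst$ augmented by $\rest$ and by $\llcon$ respectively, so their affine automorphisms are the Galilean similarities that additionally respect $\rest$ (by definition the trivial Galilean similarities, giving~\eqref{Beq5} at once) or that respect $\llcon$. For~\eqref{Beq6} I would write a Galilean similarity in its block form $L\vec u=\la b u_1,\ u_1\vec c+M\vec x\,\ra$ ($\vec x$ the spatial part, $M$ a spatial Euclidean similarity of factor $a$) and impose that $\sqMl{L\vec u}$ and $\sqMl{\vec u}$ share their zeros: the null-cone lemma forces the cross term to vanish, hence $\vec c=0$, and forces $b^2=a$; the resulting block-diagonal map satisfies $\sqEl{L\vec u}=b^2\sqEl{\vec u}$ and fixes the time axis, \ie it is a Euclidean similarity respecting $\rest$, a trivial Euclidean similarity. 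Conversely, respecting $\rest$ forces a conformal map to preserve both the time axis and its orthogonal complement $S$, making it block-diagonal with $a=b^2$, which is Galilean and null-cone preserving.

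I expect the main obstacle to be this null-cone lemma and its two applications. Proving that preserving $\{\sqMl{\vec u}=0\}$ forces proportionality over an \emph{arbitrary} ordered field — rather than $\realnumb$, where one might appeal to connectivity or to square roots — is where care is needed: the explicit Pythagorean null vectors used to kill the space–space terms $B'(\unit_i,\unit_j)$, together with the separate bookkeeping for $d=2$, are the delicate points. The Late-Classical computation in~\eqref{Beq6} is essentially the same lemma applied once more, but reading off $\vec c=0$ and $b^2=a$ from the coefficient matching, and recognising the outcome as a trivial \emph{Euclidean} rather than merely Galilean similarity, is the other place where the argument must be carried out with attention.
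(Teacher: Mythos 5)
Your proposal is correct. For parts (\ref{Beq1})--(\ref{Beq5}) it follows essentially the same route as the paper: the same reduction to linear parts by composing with translations, the same basis computations for $\EuclSim$ (orthogonality from $(\unit_i,\unit_j)\cng{}(\unit_i,-\unit_j)$, a common factor from $(\unit_i,\origin\,)\cng{}(\unit_k,\origin\,)$), the same null-cone argument for $\PoiSim$ --- including the 3--4--5 Pythagorean null vector used to kill the space--space cross terms, and the observation that for $d=2$ no such pair exists so nothing more is needed --- the same reflexivity trick to show automorphisms of $\galst$ preserve $\abstime$, and the same one-line definitional argument for $\newtst$. (A cosmetic difference in (\ref{Beq3}): you apply the null-cone lemma to $\relst$ and $\minkst$ separately, while the paper gets $\AffAut{\minkst}\subseteq\AffAut{\relst}$ for free from the definability of $\llcon$ from $\cng{\mu}$.) Part (\ref{Beq6}) is where you genuinely diverge: the paper writes $\AffAut{\Lclst}=\AffAut{\relst}\cap\AffAut{\galst}=\PoiSim\cap\GenGal$ and quotes Lemma~\ref{nevetkelladni-prop}\eqref{Ai6}, whose proof runs through Lemma~\ref{kozos}, Lemma~\ref{LemProd} and the unit-vector case analysis of Lemma~\ref{nevetkelladni-prop}\eqref{Ai3} (a Poincar\'e similarity preserving $\simult$, equivalently $\taxis$, is already Euclidean). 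You instead put the linear part of a Galilean similarity in block form $L\vec{u}=\la bu_1,\, u_1\vec{c}+M\vec{x}\,\ra$ and match coefficients against the null-cone lemma, reading off $\vec{c}=\origin$ (because $M$ maps $\simult$ onto $\simult$) and $b^2=a$; this is a correct, self-contained re-proof of the identity $\PoiSim\cap\GenGal=\TrivSim$ that bypasses Lemmas~\ref{kozos} and \ref{LemProd} entirely. What the paper's heavier lemma apparatus buys is reuse: the intersection identities of Lemma~\ref{nevetkelladni-prop} are invoked again in the proof of Theorem~\ref{leiras2} (in \eqref{EuS}, \eqref{EuL}, \eqref{ReS} and \eqref{ReR}), so on your route those facts would need to be re-derived, though your block computation yields them with little extra work. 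One point to tighten: in the easy inclusions you assert that $a\neq 0$ ``gives the converse direction of each iff'', but for $\GenGal\subseteq\AffAut{\galst}$ nonvanishing of $a$ is not by itself enough --- one also needs that inverse images of simultaneous pairs are simultaneous, since \eqref{gsim-eq} is only a forward implication; the paper handles this via Lemma~\ref{nevetkelladni-prop}\eqref{Ai2}, and in your linear setting it follows in one line because a linear bijection carrying the subspace $\simult$ into itself carries it onto itself.
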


Because $\aut{\Model}$ is always a group under composition for any
model $\Model$, it immediately follows that all of these similarity
sets are groups.

\begin{corollary}
\label{group-cor}
$\EuclSim$, $\PoiSim$, $\GenGal$, $\GenTriv$
and $\TrivSim$ all form groups under composition.
\end{corollary}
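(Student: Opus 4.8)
The plan is to obtain the corollary as a purely formal consequence of Theorem~\ref{autgroups}, using the standing fact that the full automorphism set $\aut{\Model}$ of any model is a group under composition. First I would use the five equalities of Theorem~\ref{autgroups} to rewrite each similarity set as an affine automorphism group: $\EuclSim=\AffAut{\euclg}$, $\PoiSim=\AffAut{\relst}=\AffAut{\minkst}$, $\GenGal=\AffAut{\galst}$, $\GenTriv=\AffAut{\newtst}$ and $\TrivSim=\AffAut{\Lclst}$. This reduces the entire corollary to a single general assertion: for every coordinate geometry $\geometry$ over $\field$, the affine automorphism set $\AffAut{\geometry}$ is a group under composition.

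To prove this general assertion I would work inside the symmetric group $\mathrm{Sym}(\F^d)$ of all bijections of $\F^d$ under composition, and observe that $\AffAut{\geometry}=\aut{\geometry}\cap\AffTr$, where $\AffTr$ denotes the set of all affine transformations of $\F^d$. Since the intersection of two subgroups of a group is again a subgroup, it is enough to check that both $\aut{\geometry}$ and $\AffTr$ are subgroups of $\mathrm{Sym}(\F^d)$. For $\aut{\geometry}$ this is immediate from the observation recorded just before the corollary, namely that $\aut{\Model}$ is always a group under composition. For $\AffTr$ I would verify the three subgroup conditions directly: the identity $\Id$ is affine (its linear part is the identity and its translation part is $\origin$); the composition of two affine transformations is affine, since the linear parts compose to a linear bijection and the translation parts combine into a single translation; and the inverse of an affine bijection $\vec{p}\mapsto L(\vec{p}\,)+\vec{c}$ is the affine map $\vec{q}\mapsto L^{-1}(\vec{q}\,)-L^{-1}(\vec{c}\,)$. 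Hence $\AffTr$ is a subgroup, and therefore so is $\AffAut{\geometry}=\aut{\geometry}\cap\AffTr$.

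There is no real obstacle here once Theorem~\ref{autgroups} is available; the corollary is essentially a bookkeeping step. The only point deserving explicit mention is that $\AffAut{\geometry}$ should be recognised as the intersection of \emph{two} subgroups of $\mathrm{Sym}(\F^d)$ rather than as an unstructured subset, for then closure, the presence of $\Id$, and closure under inverses are all inherited automatically, and no similarity-specific computation with scale factors or square-factors is needed at all.
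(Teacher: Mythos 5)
Your proposal is correct and takes essentially the same approach as the paper, which obtains the corollary in one line from the identifications of Theorem~\ref{autgroups} together with the standing fact that $\aut{\Model}$ is always a group under composition. The only thing you add is the routine verification, left implicit in the paper, that the affine transformations form a subgroup of $\mathrm{Sym}(\F^d)$, so that each $\AffAut{\geometry}=\aut{\geometry}\cap\AffTr$ is an intersection of two subgroups and hence a group.
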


\subsection{Lemmas}
\label{sec:lemmas}

The lemma below shows that a product and its associated squared distance function are interdefinable (we already know that the squared distance is defined in terms of the product).
\begin{lemma} 
\label{interdefinable}
Suppose $\vec{p}, \vec{q} \in F^d$ and let $\otimes$ be a product on $F^d$. Then $\otimes$ is definable from $\mathsf{d}_\otimes^2$ by
\[
(\vec{p} \otimes \vec{q}\,) = 
\frac{\sqDist{\vec{p}}{\origin} + 
\sqDist{\vec{q}}{\origin}
-\sqDist{\vec{p}}{\vec{q}} 
}{2}.
\]
\end{lemma}
\begin{proof}
Because $\otimes$ is bilinear and symmetric, we have
\[
 \sqDist{\vec{p}}{\vec{q}} 
	  = (\vec{p} - \vec{q}\,) \otimes (\vec{p} - \vec{q}\,) \\
		= \sqDist{\vec{p}}{\origin} - 2(\vec{p}\otimes\vec{q}\,) + \sqDist{\vec{q}}{\origin},
\]
and the claim now follows by rearrangement because $\field$, being an
ordered field, has characteristic 0.
\end{proof}

\begin{lemma}
\label{same-a-lemma}
Suppose $\otimes$ is a product on $F^d$ and $L$ is a linear bijection. Then the following conditions are equivalent for any $a \in F$:
\begin{enumerate}[(i)]
\item 
	$\sqDist{L(\vec{p}\,)}{L(\vec{q}\,)} = a \cdot \sqDist{\vec{p}}{\vec{q}}$ for all $\vec{p}, \vec{q} \in F^d$,
\item 
	$L(\vec{p}\,)\otimes L(\vec{q}\,)= a \cdot (\vec{p} \otimes \vec{q}\,)$ for all $\vec{p}, \vec{q} \in F^d$,
\item 
	$L(\unit_i)\otimes L(\unit_j)= a \cdot (\unit_i \otimes \unit_j)$ for all $i, j \in \{1,\dots,d\}$.
\end{enumerate}
\end{lemma}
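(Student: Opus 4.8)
The plan is to establish the three conditions cyclically, via (i)$\Rightarrow$(ii)$\Rightarrow$(iii)$\Rightarrow$(i). The facts I will lean on throughout are that, since $L$ is linear, it fixes the origin (so $L(\origin)=\origin$) and commutes with differences (so $L(\vec p-\vec q\,)=L(\vec p\,)-L(\vec q\,)$), together with the polarization identity already recorded in Lemma~\ref{interdefinable}. For the first implication (i)$\Rightarrow$(ii), I would apply Lemma~\ref{interdefinable} to the product evaluated at $L(\vec p\,)$ and $L(\vec q\,)$, giving
\[
L(\vec p\,)\otimes L(\vec q\,)=\frac{\sqDist{L(\vec p\,)}{\origin}+\sqDist{L(\vec q\,)}{\origin}-\sqDist{L(\vec p\,)}{L(\vec q\,)}}{2}.
\]
Because $L(\origin)=\origin$, the first two squared distances are $\sqDist{L(\vec p\,)}{L(\origin)}$ and $\sqDist{L(\vec q\,)}{L(\origin)}$, so hypothesis (i) lets me replace all three squared distances in the numerator by $a$ times the corresponding unscaled quantity. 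Factoring out $a$ and invoking Lemma~\ref{interdefinable} a second time then yields $a\,(\vec p\otimes\vec q\,)$, which is exactly (ii).

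The remaining two implications are pure bilinearity bookkeeping. For (ii)$\Rightarrow$(iii) I simply specialize (ii) to $\vec p=\unit_i$ and $\vec q=\unit_j$. For (iii)$\Rightarrow$(i) I would expand in the standard basis: writing $\vec p-\vec q=\sum_i(p_i-q_i)\unit_i$ and using linearity of $L$ together with bilinearity of $\otimes$,
\[
\sqDist{L(\vec p\,)}{L(\vec q\,)}=L(\vec p-\vec q\,)\otimes L(\vec p-\vec q\,)=\sum_{i,j}(p_i-q_i)(p_j-q_j)\,\bigl(L(\unit_i)\otimes L(\unit_j)\bigr).
\]
Applying (iii) to each summand pulls out the common factor $a$, and re-contracting the sum gives $a\,(\vec p-\vec q\,)\otimes(\vec p-\vec q\,)=a\cdot\sqDist{\vec p}{\vec q}$, establishing (i).

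I do not expect any genuine obstacle: the lemma is a routine consequence of bilinearity and the polarization formula. The only point that needs a little care is that the polarization step in (i)$\Rightarrow$(ii) relies on $L(\origin)=\origin$, which is precisely where the \emph{linearity} of $L$ (rather than mere affinity) is used; this is why the statement is phrased for a linear bijection. Everything else is a finite sum manipulation valid over an arbitrary ordered field, so no assumption beyond those of the lemma is required.
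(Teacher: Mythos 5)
Your proof is correct and rests on exactly the same ingredients as the paper's: polarization via Lemma~\ref{interdefinable} together with $L(\origin)=\origin$ for (i)$\Rightarrow$(ii), specialization to unit vectors for (ii)$\Rightarrow$(iii), and linearity-plus-bilinearity expansion for the return direction. The only difference is organizational: you close a single cycle (i)$\Rightarrow$(ii)$\Rightarrow$(iii)$\Rightarrow$(i), with your (iii)$\Rightarrow$(i) step merging the paper's separate (iii)$\Rightarrow$(ii) and (ii)$\Rightarrow$(i) arguments into one computation, which is marginally more economical but mathematically identical.
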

\begin{proof}
\underline{(i)$\Rightarrow$(ii)}: 
Suppose $a$ satisfies (i).
	Using Lemma \ref{interdefinable} to 
substitute for $\otimes$, 
and noting that $L(\origin\,) = \origin$ because 
$L$ is linear, gives
	\[ 
\begin{aligned}
	 L(\vec{p}\,) \otimes L(\vec{q}\,) 
&= \frac{\sqDist{L(\vec{p}\,)}{\origin} +
 \sqDist{L(\vec{q}\,)}{\origin} -
\sqDist{L(\vec{p}\,)}{L(\vec{q}\,)}}{2}\\
&= \frac{\sqDist{L(\vec{p}\,)}{L(\origin\,)} +
 \sqDist{L(\vec{q}\,)}{L(\origin\,)} -
\sqDist{L(\vec{p}\,)}{L(\vec{q}\,)}}{2}\\  
	  &= 
\frac{
a \cdot \sqDist{\vec{p}}{\origin} + 
a \cdot \sqDist{\vec{q}}{\origin} -a \cdot \sqDist{\vec{p}}{\vec{q}} }{2} \\
	  &= a \cdot (\vec{p} \otimes \vec{q}\,).
		\end{aligned} \]
\\
\underline{(ii)$\Rightarrow$(i)}: 
Suppose $a$ satisfies (ii).
Because $L$ is linear, we have
\begin{multline*}
\sqDist{L(\vec{p}\,)}{L(\vec{q}\,)} = 
\left(L(\vec{p}\,)-L(\vec{q}\,)\right)
 \otimes \left(L(\vec{p}\,)-L(\vec{q}\,)\right)=\\
L(\vec{p}-\vec{q}\,) \otimes L(\vec{p}-\vec{q}\,) = a \left[ (\vec{p}-\vec{q}\,) \otimes (\vec{p}-\vec{q}\,) \right]
 = a \cdot \sqDist{\vec{p}}{\vec{q}}.
\end{multline*}
\\
\underline{(ii)$\Rightarrow$(iii)} 
Suppose $a$ satisfies (ii). Then (iii) follows immediately on taking $\vec{p} = \unit_i$ and $\vec{q} = \unit_j$.
\\
\underline{(iii)$\Rightarrow$(ii)}: 
Suppose $a$ satisfies (iii). 
Writing $\vec{p} = \sum_i{p_i\unit_i}$ and 
$\vec{q} = \sum_j{q_j\unit_j}$, the result follows 
straightforwardly from the bilinearity of $\otimes$ 
and linearity of $L$.
\end{proof}

Re-expressing this in terms of $\otimes$-similarities gives us the following.
\begin{corollary} 
\label{star-sim-lemma}
Let $L$ be a linear bijection and suppose $a \in \F$. Then the following statements are equivalent:
\begin{enumerate}[(i)]
\item $L \in \mathsf{Sim}_\otimes$ and the square-factor of $L$ is $a$.
\item
For all $\vec{p}, \vec{q} \in F^d$,
$
	L(\vec{p}\,)\otimes L(\vec{q}\,)= a \cdot (\vec{p} \otimes \vec{q}\,)
$.
\item 
For all $i, j \in \{1,\dots,d\}$,
$
	L(\unit_i)\otimes L(\unit_j)= a \cdot (\unit_i \otimes \unit_j)
$.\qed
\end{enumerate}
\end{corollary}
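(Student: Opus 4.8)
The plan is to recognise that Corollary~\ref{star-sim-lemma} is essentially a restatement of Lemma~\ref{same-a-lemma}, so that the only real work is to match the phrasing of statement~(i) against condition~(i) of that lemma. First I would unpack what statement~(i) asserts. By the definition of $\mathsf{Sim}_\otimes$, saying that $L \in \mathsf{Sim}_\otimes$ with square-factor $a$ means that $L$ is an affine transformation satisfying $\sqDist{L(\vec{p}\,)}{L(\vec{q}\,)} = a \cdot \sqDist{\vec{p}}{\vec{q}}$ for all $\vec{p}, \vec{q} \in F^d$. The key observation is that a linear bijection is automatically an affine transformation---its translation part is trivial, equivalently $L(\origin\,) = \origin$---so for our linear $L$ the affineness requirement is free, and statement~(i) collapses to precisely the displayed scaling equation, which is verbatim condition~(i) of Lemma~\ref{same-a-lemma}.

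Next I would observe that statements~(ii) and~(iii) of the corollary are literally identical to conditions~(ii) and~(iii) of Lemma~\ref{same-a-lemma}. Having already identified that statement~(i) is equivalent to Lemma~\ref{same-a-lemma}(i), the equivalence of all three statements then follows immediately by invoking Lemma~\ref{same-a-lemma} for the given linear bijection $L$ and scalar $a$.

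I do not expect any substantive obstacle here, since the mathematical content has already been discharged in Lemma~\ref{same-a-lemma} and the corollary merely re-expresses it in the language of $\otimes$-similarities. The only point requiring mild care is purely definitional: one must read ``the square-factor of $L$ is $a$'' as the assertion that this particular $a$ witnesses the scaling clause in the definition of $\mathsf{Sim}_\otimes$. For a degenerate product the witnessing scalar need not be unique, but this does not affect the equivalence, because each of the three statements quantifies over the same fixed $a$.
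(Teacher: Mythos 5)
Your proposal is correct and matches the paper's own (implicit) reasoning: the paper presents this corollary with no separate proof, introducing it as a mere re-expression of Lemma~\ref{same-a-lemma} in the language of $\otimes$-similarities, exactly as you argue. Your explicit note that a linear bijection is automatically an affine transformation, so that statement~(i) collapses to Lemma~\ref{same-a-lemma}(i), is precisely the observation the paper leaves tacit.
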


Upon calculating $\unit_i\otimes\unit_j$ for the corresponding products, Corollary~\ref{star-sim-lemma} gives us the following two lemmas immediately:

\begin{lemma}
\label{EuclSim-lemma}
Let $L$ be a linear bijection and suppose $a \in \F$. Then the following  statements are equivalent:
\begin{enumerate}[(i)] 
\item
	$L\in\EuclSim$ and the square-factor of $L$ is $a$.
\item
For all $\vec{p}, \vec{q} \in F^d$,
$
	L(\vec{p}\,)\ScalProd L(\vec{q}\,)= a \cdot (\vec{p} \ScalProd \vec{q}\,)
$.
\item 
\label{EuclSim-units}
For all $i, j \in \{1,\dots,d\}$,
	\begin{equation}
\label{ESu}
	L(\unit_i)\ScalProd L(\unit_j)=\begin{cases}
	a \text{ if $i=j$},\\ 
	0 \text{ if $i\neq j$}.
	\end{cases}
	\end{equation} \qed
\end{enumerate}
\end{lemma}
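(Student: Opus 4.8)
The plan is to obtain Lemma~\ref{EuclSim-lemma} as the instance of Corollary~\ref{star-sim-lemma} in which the product $\otimes$ is taken to be the Euclidean scalar product $\ScalProd$. Since a Euclidean similarity is by definition a $\ScalProd$-similarity, we have $\EuclSim=\mathsf{Sim}_{\ScalProd}$, so statements (i) and (ii) of the lemma are literally statements (i) and (ii) of the corollary with $\otimes=\ScalProd$ substituted throughout. Thus the only work is to check that statement (iii) of the corollary, specialised to $\ScalProd$, takes the form of the case distinction \eqref{ESu}.

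For this step I would evaluate the scalar products of the standard basis vectors directly from the definition of $\ScalProd$. Because the unit vector $\unit_i$ has entry $1$ in its $i$th coordinate and $0$ in all others, the sum $\unit_i\ScalProd\unit_j=\sum_{k}(\unit_i)_k(\unit_j)_k$ equals $1$ when $i=j$ and $0$ when $i\neq j$. Substituting this into the condition $L(\unit_i)\ScalProd L(\unit_j)=a\cdot(\unit_i\ScalProd\unit_j)$ of Corollary~\ref{star-sim-lemma}(iii) gives $a$ on the right-hand side when $i=j$ and $0$ when $i\neq j$, which is exactly \eqref{ESu}. The three statements of the lemma are therefore identical to the three statements of the corollary, and the equivalence is inherited.

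There is no real obstacle here: all the mathematical content already lives in Corollary~\ref{star-sim-lemma} (and ultimately in Lemma~\ref{same-a-lemma}), and the passage to the Euclidean case is just the observation that the Gram matrix of the standard basis with respect to $\ScalProd$ is the identity. The only point worth stating explicitly is the identification $\EuclSim=\mathsf{Sim}_{\ScalProd}$, which lets us read off statements (i)--(ii) verbatim rather than having to reprove them.
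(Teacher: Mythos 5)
Your proposal is correct and matches the paper's own argument exactly: the paper derives Lemma~\ref{EuclSim-lemma} (and Lemma~\ref{PoiSim-lemma}) from Corollary~\ref{star-sim-lemma} precisely by ``calculating $\unit_i\otimes\unit_j$ for the corresponding products,'' which in the Euclidean case gives the identity Gram matrix and hence \eqref{ESu}. Nothing is missing; the identification $\EuclSim=\mathsf{Sim}_{\ScalProd}$ and the computation $\unit_i\ScalProd\unit_j=1$ or $0$ are the whole content.
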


\begin{lemma}
\label{PoiSim-lemma} 
Let $L$ be a linear bijection and suppose $a \in \F$. Then the following  statements are equivalent:
\begin{enumerate}[(i)] 
\item $L\in\PoiSim$ and the square-factor of $L$ is $a$.
\item
For all $\vec{p}, \vec{q} \in F^d$,
$
	L(\vec{p}\,)\MinkProd L(\vec{q}\,)= a \cdot (\vec{p} \MinkProd \vec{q}\,)
$.
\item  
\label{PS-units}
For all $i, j \in \{1,\dots,d\}$,
\begin{equation}
\label{PSu}
L(\unit_i)\MinkProd L(\unit_j)=\begin{cases}
a \text{ if $i=j=1$},\\ 
-a \text{ if $i=j\neq 1$},\\
0 \text{ if $i\neq j$}.
\end{cases}
\end{equation}\qed
\end{enumerate}
\end{lemma}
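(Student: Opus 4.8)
The plan is to derive this lemma as an immediate specialization of Corollary~\ref{star-sim-lemma}, taking the product $\otimes$ to be the Minkowski product $\MinkProd$. First I would note that, by the definition of Poincar\'e similarity, a $\MinkProd$-similarity is exactly a Poincar\'e similarity, so $\PoiSim = \mathsf{Sim}_{\MinkProd}$. Consequently, statement~(i) of the lemma is word-for-word statement~(i) of the corollary, and statement~(ii) of the lemma is word-for-word statement~(ii) of the corollary. The equivalence of (i) and (ii) is thus inherited directly, with no further work.

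It then remains only to check that statement~(iii) of the corollary, namely $L(\unit_i)\MinkProd L(\unit_j) = a\cdot(\unit_i\MinkProd\unit_j)$ for all $i,j$, is the same as the case split displayed in~\eqref{PSu}. For this I would evaluate $\unit_i\MinkProd\unit_j$ from the definition $\vec{p}\MinkProd\vec{q} = p_1q_1 - p_2q_2 - \ldots - p_dq_d$. Since $\unit_i$ has a single nonzero entry, a $1$ in position $i$, the product vanishes whenever $i\neq j$, equals $+1$ when $i=j=1$, and equals $-1$ when $i=j\neq 1$. Substituting these three values into $a\cdot(\unit_i\MinkProd\unit_j)$ produces exactly the three cases $a$, $-a$ and $0$ of~\eqref{PSu}, so condition~(iii) of the corollary and condition~(iii) of the lemma are literally the same statement.

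There is no genuine obstacle here: the entire content of the argument is the elementary evaluation of the Minkowski product on pairs of unit vectors together with the observation that $\PoiSim = \mathsf{Sim}_{\MinkProd}$, after which all three statements of the lemma become term-by-term identical to those of Corollary~\ref{star-sim-lemma} instantiated at $\otimes = \MinkProd$. (The companion Lemma~\ref{EuclSim-lemma} is obtained in precisely the same way, the only change being the computation $\unit_i\ScalProd\unit_j = 1$ for $i=j$ and $0$ for $i\neq j$.)
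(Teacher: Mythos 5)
Your proposal is correct and is exactly the paper's own argument: the paper states that ``upon calculating $\unit_i\otimes\unit_j$ for the corresponding products, Corollary~\ref{star-sim-lemma} gives us the following two lemmas immediately,'' which is precisely your specialization of the corollary to $\otimes=\MinkProd$ together with the evaluation $\unit_i\MinkProd\unit_j\in\{1,-1,0\}$. Nothing further is needed.
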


\begin{definition}
The \emph{time axis} $\taxis\subseteq\F^d$ and 
the \emph{simultaneity at 
the origin} $\simult\subseteq\F^d$ 
are   defined by
\begin{align*}
\taxis & \defeq\{\laa t,0,\ldots,0\raa : t\in\F\},\\
\simult & \defeq\{\vec{p}\in\F^d : p_1=0\}.
\end{align*}
\end{definition}
We note that $\vec{p}\,\abstime\,\vec{q}\; \Leftrightarrow\;
\vec{p}-\vec{q}\in\simult$ and $\vec{p}\,\rest\,\vec{q}\; \Leftrightarrow\;
\vec{p}-\vec{q}\in\taxis$.

\begin{lemma}
\label{LemProd}
Assume that $\otimes$ is the Euclidean scalar product $\ScalProd$ or
Minkowski product $\MinkProd$. Then
for any $\vec{p},\vec{q}\in\F^d$ the following items
hold:
\begin{enumerate}[(i)]
\item
\label{harmas}
($\vec{p}\in\taxis\setminus\{\origin\}\;\text{ and }\;
\vec{p}\otimes\vec{q}=0)\ \ \Longrightarrow\ \ \vec{q}\in\simult$.
\item
\label{MerEq}
$(\forall\vec{q}\in\simult)\ \vec{p}\otimes 
\vec{{q}}=0
\ \ \Longrightarrow\ \ \vec{p}\in\taxis$.
\item
\label{hetes}
$(\vec{p}\in\taxis\text{ and }\vec{q}\in\simult)\ 
\ \Longrightarrow\ \
\vec{p}\otimes\vec{q}=0$.

\item 
\label{same}
$\vec{p},\vec{q}\in\taxis\ 
\Longrightarrow\ \vec{p}\ScalProd\vec{q}=
\vec{p}\MinkProd\vec{q}$.
\item  
\label{MinkScalE}
$\vec{p},\vec{q}\in\simult\ \ \Longrightarrow\ \
\vec{p}\ScalProd\vec{q}=-
\left(\vec{p}\MinkProd\vec{q}\,\right)$.
\item \label{EuclMinkProdEq}
$\vec{p}\ScalProd\vec{q}=\vec{p}\MinkProd\vec{q}=0\ 
\ \Longrightarrow\ \ 
(\vec{p}\in\simult\text{ or }\vec{q}\in\simult)$.
\end{enumerate}
\end{lemma}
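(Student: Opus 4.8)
The plan is to prove all six items by writing the products out in coordinates; each then reduces to a one-line computation. Throughout I would use the uniform decomposition
\[
\vec{p}\otimes\vec{q}=p_1q_1\pm\sum_{i=2}^{d}p_iq_i,
\]
where the sign is $+$ for the Euclidean scalar product $\ScalProd$ and $-$ for the Minkowski product $\MinkProd$. The only facts about $\field$ I would invoke are that it is a field (so has no zero divisors) and that it has characteristic $0$; the latter is used only to divide by $2$ in item~\eqref{EuclMinkProdEq}. Recall also that $\vec{p}\in\taxis$ means $p_i=0$ for all $i\ge 2$, while $\vec{p}\in\simult$ means $p_1=0$.

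For the three ``mixed'' items~\eqref{harmas}--\eqref{hetes}, which hold for both products simultaneously, I would argue as follows. For~\eqref{harmas}, write $\vec{p}=\laa t,0,\ldots,0\raa$ with $t\neq 0$ (possible since $\vec p\ne\origin$); then for either product $\vec{p}\otimes\vec{q}=t\,q_1$, so $\vec{p}\otimes\vec{q}=0$ forces $q_1=0$, i.e.\ $\vec{q}\in\simult$. For~\eqref{MerEq}, I would test the hypothesis against the spatial unit vectors $\unit_2,\ldots,\unit_d\in\simult$: each gives $\vec{p}\otimes\unit_i=\pm p_i=0$, so $p_i=0$ for every $i\ge 2$ and hence $\vec{p}\in\taxis$. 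For~\eqref{hetes}, the nonzero coordinates of a vector in $\taxis$ (only the first) and of a vector in $\simult$ (only the spatial ones) are disjoint, so every summand of $\vec{p}\otimes\vec{q}$ vanishes.

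For the remaining items I would compare the two products directly. For~\eqref{same}, if $\vec{p},\vec{q}\in\taxis$ then both products collapse to $p_1q_1$, so they agree. For~\eqref{MinkScalE}, if $\vec{p},\vec{q}\in\simult$ then $p_1=q_1=0$ and the products reduce to $+\sum_{i\ge 2}p_iq_i$ and $-\sum_{i\ge 2}p_iq_i$ respectively, giving $\vec{p}\ScalProd\vec{q}=-(\vec{p}\MinkProd\vec{q}\,)$. For~\eqref{EuclMinkProdEq}, adding the two hypotheses $\vec{p}\ScalProd\vec{q}=0$ and $\vec{p}\MinkProd\vec{q}=0$ yields $2p_1q_1=0$; since $\field$ has characteristic $0$ and no zero divisors, this forces $p_1=0$ or $q_1=0$, i.e.\ $\vec{p}\in\simult$ or $\vec{q}\in\simult$.

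There is essentially no hard step here: the lemma is a collection of coordinate computations. The only points requiring a little care are the bookkeeping of which coordinates survive under the $\taxis$ and $\simult$ constraints, and the two uses of the absence of zero divisors—once to cancel the nonzero factor $t$ in~\eqref{harmas}, and once to pass from $2p_1q_1=0$ to ``$p_1=0$ or $q_1=0$'' in~\eqref{EuclMinkProdEq}.
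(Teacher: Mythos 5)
Your proof is correct and follows essentially the same route as the paper: items (i), (iii), (iv), (v) are the same direct coordinate computations the paper leaves implicit, item (ii) is proved exactly as in the paper by testing against the spatial unit vectors $\unit_2,\ldots,\unit_d$, and item (vi) is proved identically by adding the two equations to get $2p_1q_1=0$ and invoking characteristic $0$ plus the absence of zero divisors. The only difference is that you write out the routine calculations the paper dismisses with ``follow directly from the definitions.''
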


\begin{proof}
\underline{\eqref{harmas}, \eqref{hetes}, \eqref{same}, \eqref{MinkScalE}}:
These all follow directly from the definitions
of the Euclidean scalar product and Minkowski product.
Item \underline{\eqref{MerEq}} also follows from these definitions 
as for every $i\in \{2,\ldots,d\}$, we have that
 $\unit_i\in\simult$ and $\vec{p}\otimes\unit_i=0\; 
\Rightarrow\; p_i=0$. 

\proofof{\eqref{EuclMinkProdEq}}:
Assume that 
$
\vec{p}\ScalProd\vec{q}=\vec{p}\MinkProd\vec{q}=0.
$
Then
\begin{align*}
p_1q_1+p_2q_2+\ldots+p_dq_d & =0 \text{ and}\\
p_1q_1-p_2q_2-\ldots-p_dq_d & =0.
\end{align*}
By adding the equations above, we get that
$2p_1q_1=0$. 
As the characteristic of every ordered field is $0$, we get that $p_1q_1=0$, and this implies that
$p_1=0$ or $q_1=0$ because $\field$ is a field.
Thus \eqref{EuclMinkProdEq} 
holds.
\end{proof}

\begin{lemma}
\label{kozos}
Suppose that $L$ is a linear bijection and
$L\in\EuclSim\cup\PoiSim$.
Then the following items are equivalent.
\begin{enumerate}[(i)]
\item
\label{KK1}
$L[\simult]\subseteq\simult$.
\item
\label{KK2}
$L[\simult]=\simult$.
\item
\label{KK3}
$L[\taxis]\subseteq \taxis$
\item
\label{KK4}
$L[\taxis]=\taxis$.
\end{enumerate}
\end{lemma}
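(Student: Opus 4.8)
The plan is to exploit two facts: first, that a linear bijection preserves the dimension of a subspace, so a containment between equidimensional subspaces is automatically an equality; and second, that $\taxis$ and $\simult$ behave as mutual $\otimes$-orthogonal complements, as recorded in Lemma~\ref{LemProd}. Since $L\in\EuclSim\cup\PoiSim$, it is a $\otimes$-similarity for $\otimes=\ScalProd$ or $\otimes=\MinkProd$, so by Corollary~\ref{star-sim-lemma} there is a square-factor $a\in\F$ with $L(\vec{p}\,)\otimes L(\vec{q}\,)=a\cdot(\vec{p}\otimes\vec{q}\,)$ for all $\vec{p},\vec{q}\in\F^d$; this identity is the only analytic input I need, and I would use it to transport $\otimes$-orthogonality through $L$.

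I would first dispose of the two ``easy'' equivalences by dimension counting. As $L$ is an injective linear map, $L[\simult]$ is a subspace of the same dimension $d-1$ as $\simult$, so $L[\simult]\subseteq\simult$ forces $L[\simult]=\simult$; this gives (i)$\Rightarrow$(ii), and the converse is trivial. The identical argument applied to the line $\taxis$ gives (iii)$\Leftrightarrow$(iv). It then remains only to connect the ``$\simult$'' pair to the ``$\taxis$'' pair, which I would do by establishing the cycle (iii)$\Rightarrow$(i)$\Rightarrow$(ii)$\Rightarrow$(iii).

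For (iii)$\Rightarrow$(i), take $\vec{q}\in\simult$ and put $\vec{t}\defeq L(\unit_1)$, which by (iii) lies in $\taxis$ and is nonzero since $L$ is injective. Then $\vec{t}\otimes L(\vec{q}\,)=a\cdot(\unit_1\otimes\vec{q}\,)=0$ by Lemma~\ref{LemProd}\eqref{hetes}, so Lemma~\ref{LemProd}\eqref{harmas} yields $L(\vec{q}\,)\in\simult$. For (ii)$\Rightarrow$(iii), take $\vec{t}\in\taxis$; for any $\vec{q}\in\simult$, the hypothesis $\simult=L[\simult]$ lets me write $\vec{q}=L(\vec{s}\,)$ with $\vec{s}\in\simult$, whence $L(\vec{t}\,)\otimes\vec{q}=a\cdot(\vec{t}\otimes\vec{s}\,)=0$ again by Lemma~\ref{LemProd}\eqref{hetes}. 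Since this holds for every $\vec{q}\in\simult$, Lemma~\ref{LemProd}\eqref{MerEq} gives $L(\vec{t}\,)\in\taxis$. Combined with (i)$\Rightarrow$(ii) and (iii)$\Leftrightarrow$(iv), this closes the cycle and proves all four statements equivalent.

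The step I expect to be the main (if modest) obstacle is (ii)$\Rightarrow$(iii): here I genuinely need the \emph{equality} $L[\simult]=\simult$ rather than mere containment, because Lemma~\ref{LemProd}\eqref{MerEq} only detects membership in $\taxis$ after testing orthogonality against \emph{all} of $\simult$. This is exactly why I upgrade (i) to (ii) by dimension counting before attempting the orthogonality argument. It is worth noting that the value of $a$ plays no role beyond the identity above --- I never divide by it --- so the argument is uniform over both products and every $d\ge 2$, including the $d=2$ Poincar\'e case where $a$ may be negative.
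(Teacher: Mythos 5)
Your proposal is correct and follows essentially the same route as the paper's own proof: the easy equivalences (i)$\Leftrightarrow$(ii) and (iii)$\Leftrightarrow$(iv) by linear-algebraic dimension counting, then (iii)$\Rightarrow$(i) and (ii)$\Rightarrow$(iii) by transporting $\otimes$-orthogonality through $L$ and invoking Lemma~\ref{LemProd}\eqref{hetes}, \eqref{harmas} and \eqref{MerEq}. The only cosmetic difference is that you quote the scaled identity from Corollary~\ref{star-sim-lemma} where the paper extracts the (equivalent) orthogonality-preservation implication from Lemmas~\ref{EuclSim-lemma} and \ref{PoiSim-lemma}, and the paper phrases your substitution $\vec{q}=L(\vec{s}\,)$ as $L^{-1}(\vec{q}\,)\in\simult$.
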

\begin{proof}
As $L$ is a linear bijection and $\taxis$ and $\simult$ are 
subspaces of $\F^d$, we have immediately that
\eqref{KK1}$\Leftrightarrow$\eqref{KK2} and
\eqref{KK3}$\Leftrightarrow$\eqref{KK4}.
To complete the proof, it is therefore enough to show that \eqref{KK3}$\Rightarrow$\eqref{KK1}
and \eqref{KK2}$\Rightarrow$\eqref{KK3}.
\smallskip

Let $\otimes$ be the Euclidean scalar product if
$L\in\EuclSim$ and the Minkowski product 
otherwise.
Then by Lemmas~\ref{EuclSim-lemma} and \ref{PoiSim-lemma},
we have that
\begin{equation}
\label{orth}
\vec{p}\otimes\vec{q}=0\ \Longrightarrow\ L(\vec{p}\,)
\otimes L(\vec{q}\,)=0.
\end{equation}

\noindent
\proofof{\eqref{KK3} $\Rightarrow$ \eqref{KK1}}:
Assume that $L[\taxis]\subseteq\taxis$.
We want to prove that $L[\simult]\subseteq\simult$. 
To prove this
let $\vec{p}\in\simult$. Then, by Lemma~\ref{LemProd}\eqref{hetes},
$\unit_1\otimes\vec{p}=0$, and so $L(\unit_1)\otimes 
L(\vec{p}\,)=0$ by \eqref{orth}.
By our assumption that $L[\taxis]\subseteq\taxis$
and the fact that $L$ is a linear bijection,
we have that 
$L(\unit_1)\in\taxis\setminus\{\origin\}$.
Therefore, by Lemma~\ref{LemProd}\eqref{harmas},
$L(\vec{p}\,)\in\simult$ as required. 

\smallskip
\proofof{\eqref{KK2}$\Rightarrow$\eqref{KK3}}:
Assume that $L[\simult]=\simult$.
We want to prove that $L[\taxis]\subseteq\taxis$.
To prove this, let $\vec{p}\in\taxis$.
We will show that
\begin{equation}
\label{vagy}
(\forall \vec{q}\in\simult)\  L(\vec{p}\,)\otimes\vec{q}=0,
\end{equation}
whence $L(\vec{p}\,)\in\taxis$ will 
follow by Lemma~\ref{LemProd}\eqref{MerEq}.

Let $\vec{q}\in\simult$. Then 
$L^{-1}(\vec{q}\,)\in\simult$ because $L[\simult]=\simult$.
So $\vec{p}\otimes L^{-1}(\vec{q}\,)=0$, and hence
$L(\vec{p}\,)\otimes\vec{q} = 0$ by \eqref{orth}. 
Thus \eqref{vagy} holds, as claimed. 

This completes the proof.
\end{proof}

\begin{lemma}${}$
\label{nevetkelladni-prop}
\begin{enumerate}[(i)] 
\item 
\label{Ai4}
$\EuclSim\cap\aut{\la\F^d,\abstime\ra}=
\EuclSim\cap\aut{\la \F^d,\rest\ra}
=\TrivSim$. 
\item
\label{Ai3}
$\PoiSim\cap\aut{\la  \F^d,\abstime\ra}=
\PoiSim\cap\aut{\la \F^d,\rest\ra}= \TrivSim$.
\item
\label{Ai5}
$\EuclSim\cap\PoiSim=\TrivSim$ if $d>2$.
\item
\label{Ai2}
$\GenGal\subseteq\aut{\la \F^d, \abstime\ra}$.

\item
\label{Ai2.5}
$\TrivSim\subseteq\GenTriv\subseteq
\aut{\la\F^d,\abstime,\rest\ra}$.

\item
\label{Ai6}
$\PoiSim\cap\GenGal=\TrivSim$.

\item
\label{Ai7}
$\EuclSim\cap\GenGal=\TrivSim$.
\end{enumerate} 
\end{lemma}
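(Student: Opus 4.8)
The plan is to order the seven parts so that the later ones feed off the earlier ones, with the genuinely new work confined to \eqref{Ai4}, \eqref{Ai3} and \eqref{Ai5}. The recurring first move is to pass from an affine map $A$ to its linear part $L$: since translations preserve $\abstime$, $\rest$ and every squared distance, $A$ lies in $\EuclSim$, $\PoiSim$ or $\GenGal$ (and respects $\abstime$ or $\rest$) exactly when $L$ does; moreover $A$ respects $\abstime$ iff $L[\simult]=\simult$ and respects $\rest$ iff $L[\taxis]=\taxis$, so everything reduces to linear algebra on $\taxis$ and $\simult$. Part \eqref{Ai2} is then immediate: the Galilean condition $\vec{p}\,\abstime\,\vec{q}\Rightarrow A(\vec{p}\,)\,\abstime\,A(\vec{q}\,)$, read with $\vec{q}=\origin$, gives $L[\simult]\subseteq\simult$, and an injective linear map carrying the finite-dimensional $\simult$ into itself is onto it, so $L[\simult]=\simult$. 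For \eqref{Ai4}, the equality $\EuclSim\cap\aut{\la\F^d,\rest\ra}=\TrivSim$ is the definition of a trivial Euclidean similarity, while Lemma~\ref{kozos} applied to $L\in\EuclSim$ gives $L[\simult]=\simult\Leftrightarrow L[\taxis]=\taxis$; that is, for Euclidean similarities respecting $\abstime$ is the same as respecting $\rest$, which supplies the other equality.

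Part \eqref{Ai3} splits the same way: Lemma~\ref{kozos} yields $\PoiSim\cap\aut{\la\F^d,\abstime\ra}=\PoiSim\cap\aut{\la\F^d,\rest\ra}$, and it remains to identify this set with $\TrivSim$. This rests on a computation I would state once and reuse: a linear $L$ that respects $\rest$ is a Euclidean similarity iff it is a Poincar\'e similarity, with the same square-factor. Such an $L$ sends $\unit_1$ into $\taxis$ and each $\unit_i$ with $i\neq1$ into $\simult$, so Lemma~\ref{LemProd}\eqref{same} makes $\ScalProd$ and $\MinkProd$ coincide on $L(\unit_1)$, \eqref{MinkScalE} makes them negatives of one another on the $L(\unit_i)$, and \eqref{hetes} annihilates the mixed products; substituting these into the unit-vector criteria of Lemmas~\ref{EuclSim-lemma} and \ref{PoiSim-lemma} converts the Euclidean pattern $a\delta_{ij}$ into the Minkowski pattern and back.

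The main obstacle is the inclusion $\EuclSim\cap\PoiSim\subseteq\TrivSim$ of \eqref{Ai5}, where respect for $\rest$ is not assumed but must be deduced, and this is the one place $d>2$ is needed. Writing $\vec{v}_i=L(\unit_i)$, the two unit-vector criteria give $\vec{v}_1\ScalProd\vec{v}_j=\vec{v}_1\MinkProd\vec{v}_j=0$ for every $j\neq1$, so Lemma~\ref{LemProd}\eqref{EuclMinkProdEq} places $\vec{v}_1$ or $\vec{v}_j$ in $\simult$. The possibility $\vec{v}_1\in\simult$ is ruled out by a dimension count: by \eqref{MinkScalE} it would force the Euclidean and Minkowski square-factors to be negatives of each other, hence every $\vec{v}_j$ with $j\neq1$ to have vanishing spatial part and lie in $\taxis$; but then at least two non-zero (Remark~\ref{nonzero-rem}) and $\ScalProd$-orthogonal vectors would sit in the one-dimensional $\taxis$, which is impossible once $d>2$. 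Therefore $\vec{v}_j\in\simult$ for all $j\neq1$; these span $\simult$, so $\vec{v}_1$ is $\ScalProd$-orthogonal to all of $\simult$ and Lemma~\ref{LemProd}\eqref{MerEq} gives $\vec{v}_1\in\taxis$, i.e.\ $L$ respects $\rest$ and $A\in\TrivSim$. The reverse inclusion $\TrivSim\subseteq\EuclSim\cap\PoiSim$ uses no dimension hypothesis and follows from the core computation above.

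Finally, \eqref{Ai2.5} follows from \eqref{Ai4}: a trivial Euclidean similarity respects $\rest$, hence also $\abstime$, so it satisfies the Galilean definition and lies in $\GenTriv$; and any trivial Galilean similarity respects $\rest$ by definition and $\abstime$ by \eqref{Ai2}, giving $\GenTriv\subseteq\aut{\la\F^d,\abstime,\rest\ra}$. Parts \eqref{Ai6} and \eqref{Ai7} then drop out, since every $A\in\GenGal$ respects $\abstime$ by \eqref{Ai2}: this gives $\PoiSim\cap\GenGal\subseteq\PoiSim\cap\aut{\la\F^d,\abstime\ra}=\TrivSim$ and $\EuclSim\cap\GenGal\subseteq\TrivSim$, while the reverse inclusions follow from $\TrivSim\subseteq\GenTriv\subseteq\GenGal$ together with $\TrivSim\subseteq\PoiSim$ and $\TrivSim\subseteq\EuclSim$.
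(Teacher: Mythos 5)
Your proof is correct and, in overall architecture, it is the same as the paper's: the same reduction of affine maps to their linear parts via translations, the same use of Lemma~\ref{kozos} to identify respect for $\abstime$ with respect for $\rest$ inside $\EuclSim$ and $\PoiSim$, the same unit-vector conversion between \eqref{ESu} and \eqref{PSu} via Lemma~\ref{LemProd} for items \eqref{Ai4} and \eqref{Ai3}, and the same derivation of items \eqref{Ai2}, \eqref{Ai2.5}, \eqref{Ai6} and \eqref{Ai7} from the earlier ones. The only substantive divergence is inside item \eqref{Ai5}, in how the inclusion $\EuclSim\cap\PoiSim\subseteq\TrivSim$ is forced. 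The paper first fixes some $k$ with $L(\unit_k)\notin\simult$ (existing by bijectivity), deduces $L(\unit_i)\in\simult$ for all $i\neq k$, and then proves $k=1$ by deriving $L(\unit_1)\ScalProd L(\unit_1)=L(\unit_1)\MinkProd L(\unit_1)$ and observing that $L(\unit_1)\in\simult$ would make this positive sum of squares equal its own negative. You instead split directly on whether $L(\unit_1)\in\simult$ and exclude that case by a dimension count: the Euclidean and Minkowski square-factors would have to be negatives of one another, forcing every $L(\unit_j)$ with $j\neq 1$ into $\taxis$, and for $d>2$ this places two non-zero, mutually $\ScalProd$-orthogonal vectors in the one-dimensional $\taxis$, which is impossible. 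Both contradictions are sound, both consume the hypothesis $d>2$ at the same point (the existence of enough indices besides $1$ and the exceptional one), and your ending --- $L(\unit_1)\in\taxis$ via Lemma~\ref{LemProd}\eqref{MerEq}, i.e.\ respect for $\rest$ --- is interchangeable with the paper's ending via $L[\simult]=\simult$ and item \eqref{Ai4}. What your variant buys is a small gain in transparency: the configuration you exclude, $L(\unit_1)\in\simult$, is exactly the time--space swap that genuinely occurs when $d=2$ (cf.\ Remark~\ref{nonzero-rem}), so your argument makes visible why the dimension hypothesis cannot be dropped.
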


\begin{proof}
It is easy to see that for every linear bijection $L$, we have
\begin{equation}
\label{refor}
L\in\aut{\la \F^d,\abstime\ra}\ \Leftrightarrow\ L[\simult]=\simult\
\text{ and }\ L\in\aut{\la\F^d,\rest\ra}\ \Leftrightarrow\ L[\taxis]=\taxis.
\end{equation}
From this, together with Lemma~\ref{kozos} and the fact that all translations 
are contained in each of sets
$\EuclSim$, $\PoiSim$, $\aut{\la \F^d,\abstime\ra}$ and
$\aut{\la\F^d,\rest\ra}$, we have
\begin{align}
\label{EuSR}
\EuclSim\cap\aut{\la\F^d,\abstime\ra} & =
\EuclSim\cap\aut{\la \F^d,\rest\ra} 	\\
\label{PoSR}
\PoiSim\cap\aut{\la  \F^d,\abstime\ra} & =
\PoiSim\cap\aut{\la \F^d,\rest\ra}.
\end{align}

\smallskip
\proofof{\eqref{Ai4}}:
By definition, $\TrivSim=\EuclSim\cap\aut{\la \F^d,\rest\ra}$; so \eqref{Ai4} follows 
immediately from \eqref{EuSR}.

\smallskip
\proofof{\eqref{Ai3}}:
We have already seen, \eqref{PoSR}, that $\PoiSim\cap\aut{\la  \F^d,\abstime\ra}
= \PoiSim\cap\aut{\la \F^d,\rest\ra}$; we have to show that these sets
are equal to $\TrivSim$.

First, we prove that  
$\PoiSim\cap\aut{\la\F^d,\rest\ra} \subseteq \TrivSim$.
Let $L\in\PoiSim\cap\aut{\la\F^d,\rest\ra}$.
To prove that $L\in\TrivSim$, it is sufficient to prove that
$L\in\EuclSim$ as $\TrivSim=\EuclSim\cap\aut{\la \F^d,\rest\ra}$.
We can, as usual, assume that $L$ is a linear
bijection because $\PoiSim$ and $\EuclSim$ contain
all the translations, they are closed under
composition and translations respect $\rest$.
By \eqref{refor} and \eqref{PoSR}, we have
$L[\taxis]=\taxis$ and $L[\simult]=\simult$, and so
\begin{equation}
\label{long}
L(\unit_1)\in\taxis 
\quad\text{ and }\quad
L(\unit_i)\in\simult \text{ if } i\in\{2,\dots,d\} .
\end{equation}

Since $L \in \PoiSim$, Lemma~\ref{PoiSim-lemma} tells us there is an $a\in\F$
satisfying \eqref{PSu}, \ie
$L(\unit_i)\MinkProd L(\unit_j) = a$ if $i=j=1$; $-a$ if $i=j\neq 1$; and
$0$ if $i\neq j$. We have to prove the corresponding $\EuclSim$-based equation, 
\eqref{ESu}, holds for $L$,
which we can easily do using the basic facts listed in Lemma~\ref{LemProd}:

\underline{\textit{case 1}: $i=j=1$}:
According to \eqref{long}, we have $L(\unit_1) \in \taxis$. Therefore,
$L(\unit_1) \ScalProd L(\unit_1) = L(\unit_1) \MinkProd L(\unit_1) = a$ by
\eqref{PSu} and \ref{LemProd}\eqref{same}.

\underline{\textit{case 2}: $i=j\neq 1$}:
According to \eqref{long}, we have $L(\unit_i) \in \simult$. Therefore,
$L(\unit_i) \ScalProd L(\unit_i) =  -L(\unit_i) \MinkProd L(\unit_i) = -(-a) = a$ by
\eqref{PSu} and \ref{LemProd}\eqref{MinkScalE}.

\underline{\textit{case 3}: $i \neq j$}:
If $i=1$ (or analogously, if $j=1$), we have $\unit_i \in \taxis$ and $\unit_j \in \simult$, whence $L(\unit_i) \in \taxis$
and $L(\unit_j) \in \simult$ by \eqref{long}, and $L(\unit_i) \ScalProd L(\unit_j) = 0$
by \ref{LemProd}\eqref{hetes}.
Finally, suppose
$i, j \neq 1$. Then both $L(\unit_i)$ and $L(\unit_j)$ are in $\simult$, and so
$L(\unit_i) \ScalProd L(\unit_j) = - L(\unit_i) \MinkProd L(\unit_j) = 0$
by \eqref{PSu} and \ref{LemProd}\eqref{MinkScalE}.

As required, we have successfully shown that 
$L(\unit_i) \ScalProd L(\unit_j) = a$
when $i=j$ and $0$ otherwise, whence it follows by
Lemma~\ref{EuclSim-lemma} that $L\in\EuclSim$.
This shows that
\begin{equation}
\label{lila}
\PoiSim\cap\aut{\la\F^d,\rest\ra}\subseteq\TrivSim.
\end{equation}

It remains to prove the reverse inclusion,
\begin{equation}
\label{elil}
\PoiSim\cap\aut{\la\F^d,\rest\ra}\supseteq\TrivSim ,
\end{equation} 
so choose any $L\in\TrivSim$, and assume wlog that $L$ is a linear
bijection. By \eqref{refor} and 
item \eqref{Ai4} of the present lemma, 
we have that 
$L[\taxis]=\taxis$ and $L[\simult]=\simult$,
and so \eqref{long} holds, \ie
$L(\unit_1)\in\taxis$ and $L(\unit_i)\in\simult$ 
when $i \neq 1$.

As $\TrivSim\subseteq\EuclSim$, 
Lemma~\ref{EuclSim-lemma} tells us there is some $a$ satisfying \eqref{ESu},
\ie $L(\unit_i)\ScalProd L(\unit_j) = a$ if $i=j$, and $0$ otherwise; and by a
 similar three-case argument to that just given, it is easy to see that the 
corresponding $\PoiSim$-based equation,
\eqref{PSu}, holds for $L$.
It therefore follows, by
Lemma~\ref{PoiSim-lemma}, that $L\in\PoiSim$.
As $L$ also respects $\rest$ (by definition of $\TrivSim$), 
we have that $L\in\PoiSim\cap\aut{\la\F^d,\rest\ra}$, and this shows
 that the reverse inclusion, \eqref{elil}, holds as required.

\proofof{\eqref{Ai5}}:
Assume $d>2$. By
item \eqref{Ai3} of the present lemma, 
we have that
$\TrivSim\subseteq\EuclSim\cap\PoiSim$.
Thus to prove \eqref{Ai5} it remains to prove that
$\EuclSim\cap\PoiSim\subseteq\TrivSim$, so choose
any $L\in\EuclSim\cap\PoiSim$.

As before, we can assume wlog that $L$ is 
a linear bijection. We want
to prove that $L\in\TrivSim$, and since we have already shown (item
\eqref{Ai4} of the present lemma),
that $\TrivSim=\EuclSim\cap\aut{\la \F^d,\abstime\ra}$,
it is enough to show that $L$ respects $\abstime$.

Note first that we can choose some $k$ ($1\leq k\leq d$) 
such that
$L(\unit_k)\not\in\simult$; otherwise,
we would have $L(\unit_k)\in\simult$ for all $k$, 
and by linearity of $L$, $L[\F^d]\subseteq\simult$
would then hold, which contradicts the fact that $L$ is 
bijective.

As $L$ is both a Euclidean and a Poincar\'e
similarity, Lemmas~\ref{EuclSim-lemma} 
and \ref{PoiSim-lemma} tell us that, for this $k$ and every $i\neq k$,
$
L(\unit_k)\ScalProd L(\unit_i) = 
L(\unit_k)\MinkProd L(\unit_i)=0.
$
As $L(\unit_k)\not\in\simult$, it follows by
Lemma~\ref{LemProd}\eqref{EuclMinkProdEq} that 
\begin{equation}
\label{bennevanE}
L(\unit_i)\in\simult\text{ for every $i\neq k$}.
\end{equation}

 We will show that $k = 1$ and hence, 
by linearity, bijectivity and \eqref{bennevanE}, that
$L[\simult] = \simult$.
To this end, choose any 
$j \in \{2,\dots,d\}\setminus\{k\}$.
Such $j$ exists because $d\geq 3$; and because $j \neq k$, we have from \eqref{bennevanE} 
that 
$
L(\unit_j)\in\simult.
$
Using the
fact that $L$ is a linear bijection, it now follows that
\begin{align*}
L(\unit_1)\ScalProd L(\unit_1) &=
       L(\unit_j)\ScalProd L(\unit_j) && 
\text{(by $L\in\EuclSim$ and Lemma~\ref{EuclSim-lemma})}\\
     &= -\left(L(\unit_j)\MinkProd L(\unit_j)\right) &&
\text{(by $L(\unit_j)\in\simult$ and 
 Lemma~\ref{LemProd}\eqref{MinkScalE})}\\
  &= L(\unit_1)\MinkProd L(\unit_1) &&
\text{(by $L\in\PoiSim$, Lemma \ref{PoiSim-lemma} and $j\neq 1$).}  
\end{align*}

This implies that $L(\unit_1) \not\in \simult$, 
because otherwise
Lemma~\ref{LemProd}\eqref{MinkScalE} 
would entail $L(\unit_1) \ScalProd  L(\unit_1)
= - ( L(\unit_1) \ScalProd L(\unit_1))$, and 
hence (because $\field$ is ordered)
that $L(\unit_1) \ScalProd L(\unit_1) = 0$.
However, $L(\unit_1) \ScalProd L(\unit_1)>0$ 
because it is a non-empty sum of squares as
by bijectivity and linearity of $L$,  
$L(\unit_1)\neq\origin$.

Looking at \eqref{bennevanE}, it now follows as claimed that $k=1$ (and thus
$L[\simult]=\simult$), and hence by \eqref{refor} that 
$L\in\aut{\la\F^d,\abstime\ra}$. 
That is, $L$ respects $\abstime$ as required.
This completes the proof of \eqref{Ai5}.

\proofof{\eqref{Ai2}}:
It is easy to see by \eqref{gsim-eq}
that composition
of a Galilean similarity and 
a translation is a Galilean similarity because
translations respect $\abstime$ and
do not change squared Euclidean distances.
Let $A\in\GalSim$. Since $A$ is affine, we 
can write
$A=\tau\circ L$, where $L$ is a linear 
bijection and $\tau$ a translation.
Now $L=\tau^{-1}\circ A$, and since $\tau^{-1}$ is also
a translation, we have $L\in\GalSim$. To
prove that $A\in\aut{\la\F^d,\abstime\ra}$, 
it is enough to prove that 
$L\in\aut{\la\F^d,\abstime\ra}$ 
because translations respect $\abstime$.
We have that $L[\simult]\subseteq\simult$
because $L(\origin\,)=\origin$,
every point in $\simult$ is
$\abstime$-related to $\origin$,
$L$ takes
$\abstime$-related points to $\abstime$-related 
points (by $L\in\GalSim$) and any point
$\abstime$-related to $\origin$ is
in $\simult$. Then $L[\simult]=\simult$
because $L$ is a bijective linear
transformation.
By 
\eqref{refor}, 
$L\in\aut{\la\F^d,\abstime\ra}$ follows as required.

\proofof{\eqref{Ai2.5}}:
Recall that 
$\GenTriv \defeq \GenGal\cap\aut{\la \F^d,\rest\ra}$.
From this and item \eqref{Ai2}, just proven, which says that
$\GenGal\subseteq\aut{\la\F^d,\abstime\ra}$, 
we have immediately that $\GenTriv\subseteq\aut{\la\F^d,\abstime,\rest\ra}$.

To prove that $\TrivSim\subseteq\GenTriv$,
let $A\in\TrivSim$ and recall that $\TrivSim \defeq \EuclSim\cap\aut{\la\F^d,\rest\ra}$.
As $A\in\EuclSim$, there is $a\in\F$ such that
$\sqEd{A(\vec{p}\,)}{A(\vec{q}\,)}=
a\cdot \sqEd{\vec{p}}{\vec{q}}$ for every 
$\vec{p},\vec{q}\in\F^d$.
Moreover, we know that $A$ respects $\abstime$ by item \eqref{Ai4}.
Therefore, \eqref{gsim-eq} is satisfied, \ie 
$A\in\GenGal$. Thus $A$ is an element of $\GenGal$ which respects $\rest$, \ie $A\in\GenTriv$,
and so $\TrivSim\subseteq\GenTriv$, as claimed.
This completes the proof of \eqref{Ai2.5}.

\proofof{\eqref{Ai6}}:
By \eqref{Ai2} and \eqref{Ai3}, we have that
\[
\PoiSim\cap\GenGal\subseteq (\PoiSim
\cap\aut{\la \F^d,\abstime\ra})=\TrivSim.
\]
Conversely, by \eqref{Ai3} and \eqref{Ai2.5}, we have that
$\TrivSim\subseteq\PoiSim$ and
$\TrivSim\subseteq\GenGal$.  
Therefore,
\[ 
\TrivSim\subseteq\PoiSim\cap\GenGal.
\]
Thus $\PoiSim\cap\GenGal=\TrivSim$ as claimed. 
This completes the proof of \eqref{Ai6}.

\proofof{\eqref{Ai7}}:
By \eqref{Ai2.5}, $\TrivSim\subseteq\GalSim$, 
and by construction $\TrivSim\subseteq\EuclSim$. 
Therefore,
\[
\TrivSim\subseteq\EuclSim\cap\GalSim .
\]
Conversely, by \eqref{Ai2} and \eqref{Ai4}, we have
\[
\EuclSim\cap\GalSim\subseteq\EuclSim\cap
\aut{\la \F^d,\abstime\ra}=\TrivSim .
\]
Thus $\EuclSim\cap\GalSim=\TrivSim$, as claimed.

This completes the proof of \eqref{Ai7} 
and hence of the Lemma as a whole.
\end{proof}

\subsection{Proof of Theorem~\ref{autgroups}  
(page \pageref{autgroups})}

\begin{proofofpart}{\ref{autgroups}\eqref{Beq1} 
$(\AffAut{\euclg}=\EuclSim)$}
It is easy to see from the definitions that
Euclidean similarities take
congruent segments to congruent segments, 
and (because their square-factors are nonzero, 
cf.\ Remark~\ref{nonzero-rem}) non-congruent
segments to non-congruent ones. As one would expect, therefore, Euclidean
similarities respect the Euclidean congruence relation, $\cng{}$. They
also respect $\Bw$, because they are affine transformations. 
It follows from this that 
$\EuclSim\subseteq\AffAut{\euclg}$.

To prove conversely that $\AffAut{\euclg}\subseteq\EuclSim$,
choose any $A\in\AffAut{\euclg}$ and write $A=\tau\circ L$
where $L$ 
is a linear bijection and $\tau$
a translation. 
We want to prove that $A\in\EuclSim$, but since we already 
know that $\tau\in\EuclSim$, it is
enough to show that $L\in\EuclSim$.
We will do this by showing that
$L$ satisfies item \eqref{EuclSim-units} 
of Lemma~\ref{EuclSim-lemma}. 

Observe that $L\in\AffAut{\euclg}$ because both $A$ and $\tau$ are 
in $\AffAut{\euclg}$.

We begin by proving that for every distinct $i,j$, we have
that $L(\unit_i)\ScalProd L(\unit_j)=0$. To see this, let $i,j$ be
distinct, and observe that
$(\unit_i,\unit_j)\cng{}(\unit_i,-\unit_j)$. This implies that
$(L(\unit_i),L(\unit_j))\cng{}(L(\unit_i), -L(\unit_j))$ because
$L$ is a linear transformation and an automorphism of $\euclg$. 
By this, we have that
\[
(L(\unit_i)-L(\unit_j))\ScalProd (L(\unit_i)-L(\unit_j)) =
(L(\unit_i)+L(\unit_j))\ScalProd (L(\unit_i)+L(\unit_j)).
\]
Using the bilinearity and symmetry of the scalar product
$\ScalProd$, this implies that
\begin{multline*}
L(\unit_i)\ScalProd L(\unit_i)-
2 L(\unit_i)\ScalProd L(\unit_j)+
L(\unit_j)\ScalProd L(\unit_j)=\\
L(\unit_i)\ScalProd L(\unit_i)+2 L(\unit_i)\ScalProd L(\unit_j)+
L(\unit_j)\ScalProd L(\unit_j)
\end{multline*}
and hence $4 \left(L(\unit_i)\ScalProd L(\unit_j)\right)=0$. Because
the characteristic of an ordered field is $0$, it follows that
$L(\unit_i)\ScalProd L(\unit_j)=0$.

It remains to find an $a\in\F$ such that 
$L(\unit_i)\ScalProd L(\unit_i)=a$
for all $i$.
Choose any $k$ ($1\leq k\leq d$) and 
set
$a:=L(\unit_k)\ScalProd L(\unit_k)$.
Now choose any $i$. We will prove that  
$L(\unit_i)\ScalProd L(\unit_i)=a$.
Clearly $(\unit_i,\origin\, )\cng{}
(\unit_{k},\origin\,)$.
This implies that 
$(L(\unit_i),\origin\,)\cng{}
(L(\unit_{k}),
\origin\,)$
because $L$ is an automorphism of 
$\euclg$ and $L(\origin\,)=\origin$.
So $L(\unit_i)\ScalProd L(\unit_i)= L(\unit_k)
\ScalProd
L(\unit_{{k}})=a$ as claimed, 
and $L\in\EuclSim$ follows. 
\end{proofofpart}

\begin{proofofpart}{\ref{autgroups}\eqref{Beq3} $(\AffAut{\relst}=\AffAut{\minkst}=\PoiSim)$}
It is easy to see from the definitions that
$\PoiSim\subseteq\AffAut{\minkst}$, using an argument analogous
to that just given in item \eqref{Beq1} to show that $\EuclSim\subseteq\AffAut{\euclg}$.
We also have that $\AffAut{\minkst}\subseteq\AffAut{\relst}$ because
$\llcon$ is definable in terms of $\cng{\mu}$ (as
 $\vec{p}\,\llcon\,\vec{q}\ \Leftrightarrow\ 
(\vec{p},\vec{q}\,)\cng{\mu}
(\vec{p},\vec{p}\,)$). This shows that
\[
    \PoiSim \subseteq \AffAut{\minkst} \subseteq \AffAut{\relst}.
\]
So to complete the proof of \eqref{Beq3}, we
need only show that $\AffAut{\relst}\subseteq\PoiSim$. 
For the case $d>2$, this follows easily from Lester~\cite{Lester}.
Here, we give a self-contained proof
 which works for $d=2$, too.

Given any $\vec{p}$ and
$\vec{q}$, by \eqref{llcon-alt}, we have
\begin{equation}
\label{e-llcon}
\vec{p}\,\llcon\,\vec{q}\ \Longleftrightarrow\ (\vec{p}-\vec{q}\,)\MinkProd
(\vec{p}-\vec{q}\,)=0.
\end{equation}

To prove $\AffAut{\relst}\subseteq\PoiSim$,
choose any $A\in\AffAut{\relst}$
and assume $A=\tau\circ L$
where $L$ is a linear bijection and $\tau$
is a translation.
Because $A$ and $\tau$ are
automorphisms of $\relst$, so is $L$; and to
prove that $A\in\PoiSim$, it is enough to 
show that $L\in\PoiSim$ because $\tau\in\PoiSim$.
We will show this by showing that $L$ 
satisfies item \eqref{PS-units}
of Lemma~\ref{PoiSim-lemma}. 

First, we prove, for all $i \in  \{2,\dots,d\}$, that
\begin{equation}
\label{e1-bizonyitando}
L(\unit_1)\MinkProd L(\unit_i)=0 \quad\text{ and }\quad
L(\unit_1)\MinkProd L(\unit_1)=- \left(L(\unit_i)\MinkProd L(\unit_i)\right).
\end{equation} 
Choose any $i \in \{2,\dots,d\}$ and observe that
$\unit_1\,\llcon\,\unit_i$ and $\unit_1\,\llcon\, (-\unit_i)$.  From
this it follows that $L(\unit_1)\,\llcon\, L(\unit_i)$ and
$L(\unit_1)\,\llcon\, \left(-L(\unit_i)\right)$ because $L$ is a
linear transformation and an automorphism of $\relst$. Thus, by
\eqref{e-llcon}, we have that
$\left(L(\unit_1)-L(\unit_i)\right)\MinkProd\left(L(\unit_1)-
L(\unit_i)\right)=0$ and $\left(L(\unit_1)+L(\unit_i)\right)
\MinkProd\left(L(\unit_1)+ L(\unit_i)\right)=0$. From these, using the
fact that $\MinkProd$ is a symmetric bilinear function, we get that
\begin{align*}
  & L(\unit_1)\MinkProd L(\unit_1)-2\left(L(\unit_1)\MinkProd
  L(\unit_i)\right) +L(\unit_i)\MinkProd L(\unit_i) = 0,\\ &
  L(\unit_1)\MinkProd L(\unit_1)+2\left(L(\unit_1)\MinkProd
  L(\unit_i)\right) +L(\unit_i)\MinkProd L(\unit_i) = 0.
\end{align*}
By adding the two equations above and using the fact that the
characteristic of $\field$ is 0, we get $L(\unit_1)\MinkProd
L(\unit_1)=-\left(L(\unit_i)\MinkProd L(\unit_i)\right)$.  By
subtracting one equation from the other, we get $L(\unit_1)\MinkProd
L(\unit_i)=0$. Thus \eqref{e1-bizonyitando} holds.

If $d=2$, this implies by Lemma~\ref{PoiSim-lemma} that $L\in\PoiSim$, and we are done.
To prove that 
$L\in\PoiSim$ for $d>2$, it remains to prove that
\begin{equation}
\label{e2-bizonyitando}
L(\unit_i)\MinkProd L(\unit_j)=0\ \text{ if $1\neq i\neq j\neq 1$},
\end{equation} 
so choose any distinct $i, j \in  \{2,\dots,d\}$.

We observe that the set of rational numbers is contained
in $\F$ as the characteristic of $\field$ is $0$; and it is easy to check that 
$
\left(\frac{3}{5}\unit_i+\frac{4}{5}\unit_j\right)\,\llcon\,\unit_1.
$ 
It follows, using the fact that $L$ is both an automorphism of $\relst$ and
a linear transformation, that
$\left(\frac{3}{5}L(\unit_i)+\frac{4}{5}L(\unit_j)\right)\,\llcon\,
L(\unit_1)$. 
From this, together with \eqref{e-llcon}, \eqref{e1-bizonyitando}
and the fact that  $\MinkProd$ is a symmetric bilinear function, we calculate 
directly that                          
\begin{align*}
0
&=
\left[ \tfrac{3}{5}L(\unit_i)+\tfrac{4}{5}L(\unit_j)-L(\unit_1) \right]
\MinkProd
\left[ \tfrac{3}{5}L(\unit_i)+\tfrac{4}{5}L(\unit_j)-L(\unit_1) \right] 
\\
&=
\tfrac{ 9}{25} \left[ L(\unit_i)\MinkProd L(\unit_i) \right] +
\tfrac{16}{25} \left[ L(\unit_j)\MinkProd L(\unit_j) \right] +
               \left[ L(\unit_1)\MinkProd L(\unit_1) \right] +
\tfrac{24}{25} \left[ L(\unit_i)\MinkProd L(\unit_j) \right]
\\
&=
\tfrac{- 9}{25} \left[ L(\unit_1)\MinkProd L(\unit_1) \right] 
-\tfrac{16}{25} \left[ L(\unit_1)\MinkProd L(\unit_1) \right] 
+               \left[ L(\unit_1)\MinkProd L(\unit_1) \right] 
+\tfrac{24}{25} \left[ L(\unit_i)\MinkProd L(\unit_j) \right] 
\\
&=
 \tfrac{( -9 - 16 + 25)}{25} \left[ L(\unit_1)\MinkProd L(\unit_1) \right] 
+\tfrac{       24      }{25} \left[ L(\unit_i)\MinkProd L(\unit_j) \right] 
\\
&= 
\tfrac{24}{25}  \left( L(\unit_i)\MinkProd L(\unit_j) \right).
\end{align*}

Thus \eqref{e2-bizonyitando} holds, and we are done. 
\end{proofofpart}

\begin{proofofpart}{\ref{autgroups}\eqref{Beq4} $(\AffAut{\galst}=\GenGal)$}

First, we show that $\GenGal \subseteq \AffAut{\galst}$.  

We call  segment $(\vec{p},\vec{q}\,)$ 
spatial if{}f $\vec{p}\,\abstime\,\vec{q}$.
 It is easy to see from the definitions that
Galilean similarities take spatial
segments to spatial segments and
spatially
congruent segments to spatially congruent
ones. As the spatial square-factors are
nonzero, these similarities also take spatially noncongruent  
spatial segments to spatially noncongruent
ones.  
The inverse images of spatial segments
under Galilean similarities are also spatial
by Lemma~\ref{nevetkelladni-prop}\eqref{Ai2}.
Therefore, the inverse images of
spatially congruent segments have to be
spatially congruent. 
 Therefore, Galilean similarities respect
congruence on simultaneity $\cng{\abstime}$. They
also respect $\Bw$ as they are affine transformations.
Hence $\GalSim\subseteq \AffAut{\galst}$.

To
prove the converse, \ie that
$\AffAut{\galst}\subseteq\GalSim$, choose any 
$A \in \AffAut{\galst}$, and
write $A=\tau\circ L$, where $L$ is a linear
transformation and $\tau$ a translation.
We want to prove that
$A\in\GalSim$, but since 
$\tau\in\GalSim$ and $\GalSim$ is
closed under composition (this is easily checked), it is enough to prove
that $L\in\GalSim$.

Because $\tau^{-1} \in \AffAut{\galst}$, 
it is clear that 
$L = \tau^{-1} \circ A \in \AffAut{\galst}$ as well.
So $L$ respects $\cng{\abstime}$, and hence also
$\abstime$, because $\abstime$ can
be defined in terms of $\cng{\abstime}$
as $\vec{p}\,\abstime\,\vec{q}\; \Leftrightarrow\;
(\vec{p},\vec{q}\,)\,\cng{\abstime}\,(\vec{p},\vec{q}\,)$. 
Therefore,
$L$ is a bijection on $\simult$, \ie 
$L[\simult]=\simult$.
Moreover, it is easy to see
that $(\unit_i,\unit_j) \cng{\abstime} (\unit_i, -\unit_j)$ whenever $i > 1, j > 1$ are distinct.
As in the proof above for $\euclg$ 
(part (\ref{Beq1})) it follows that 
$L(\unit_i) \ScalProd L(\unit_j) = 0$ 
whenever $i, j \in\{2,\dots,d\}$ are distinct, 
and that there 
is some $a > 0$ for which 
$L(\unit_i) \ScalProd L(\unit_i) = a$ for all 
$i \in	\{2,\dots,d\}$. 
Hence, if $\vec{p}, \vec{q} \in F^d$ 
both have 
$\unit_1$-component 0, then 
$L(\vec{p}\,) \ScalProd L(\vec{q}\,) = a\cdot 
(\vec{p} \ScalProd \vec{q}\,)$.

To complete the proof that $L \in \GenGal$, 
let us suppose that
$\vec{p}\, \abstime\, \vec{q}$.
As $L$ respects $\abstime$,
we have
$L(\vec{p}\,)\, \abstime\, L(\vec{q}\,)$. 
Furthermore, the $\unit_1$-component of 
$(\vec{p}-\vec{q}\,)$ is 0 because 
$\vec{p}\,\abstime\,\vec{q}$, 
and as we have just seen, this
implies that $L(\vec{p}-\vec{q}\,) \ScalProd 
L(\vec{p}-\vec{q}\,) = a\cdot\left(( \vec{p} -
\vec{q}\, ) \ScalProd ( \vec{p} - \vec{q}\,)\right)$.
The claim follows.  
\end{proofofpart}

\begin{proofofpart}{\ref{autgroups}\eqref{Beq5} 
$(\AffAut{\newtst}=\GenTriv)$}
By definition, $\newtst \defeq \laa \galst,\rest\raa$
By \eqref{Beq4}, $\AffAut{\galst}=\GenGal$. 
Therefore, $\AffAut{\newtst}=\AffAut{\galst}\cap
\aut{\la\F^d,\rest\ra}=
\GenGal\cap\aut{\la F^d,\rest\ra} {\defeq}\GenTriv$.
\end{proofofpart}

\begin{proofofpart}{\ref{autgroups}\eqref{Beq6} 
$(\AffAut{\Lclst}=\TrivSim)$}
Since  $\Lclst \defeq \la\galst,\llcon\ra$
and  $\relst \defeq \la\F^d,\llcon,\Bw\ra$, 
we have that 
$\AffAut{\Lclst}=\AffAut{\relst}\cap\AffAut{\galst}$.
By \eqref{Beq3} and \eqref{Beq4},
$\AffAut{\relst}=\PoiSim$ and
$\AffAut{\galst}=\GenGal$.
Item \eqref{Ai6} of Lemma~\ref{nevetkelladni-prop}
says that $\PoiSim\cap\GenGal=\TrivSim$.
Therefore, $\AffAut{\Lclst}=\TrivSim$.
\end{proofofpart}

This completes the proofs establishing Theorem~\ref{autgroups}.

\subsection{Proof of Theorems~\ref{leiras} and 
\ref{leiras2}}

In the proofs 
of Theorems~\ref{leiras}
and \ref{leiras2}, we will use Theorem~\ref{re-thm-erlangen}\eqref{Er2} and Corollary~\ref{re-cor-erlangen}\eqref{ErCor2},
 which say that
\begin{align}
\label{ER2}
\tag{Theorem~\ref{re-thm-erlangen}\eqref{Er2}}
\ca{\geometry}\subseteq\ca{\geometry'} & \ 
\Longleftrightarrow  \
 \AffAut{\geometry}\supseteq
\AffAut{\geometry'},\\
\label{CER2}
\ca{\geometry}=\ca{\geometry'} & \ 
\Longleftrightarrow  \
\AffAut{\geometry}=\AffAut{\geometry'}
\tag{Corollary~\ref{re-cor-erlangen}\eqref{ErCor2}}.
\end{align}

\subsubsection{Proof of Theorem \ref{leiras}}

\begin{proofofpart}{\ref{leiras}\eqref{Le1} $(\ca{\relst}=\ca{\minkst})$}
Follows from \ref{CER2}  and 
Theorem~\ref{autgroups}\eqref{Beq3}, which
says that $\AffAut{\relst}=\AffAut{\minkst}$.
\end{proofofpart}

\begin{proofofpart}{\ref{leiras}\eqref{Le2} $(\ca{\galst}\subseteq\ca{\newtst})$}
Because $\newtst \defeq \la\galst,\rest\ra$.
\end{proofofpart}

\begin{proofofpart}{\ref{leiras}\eqref{Le3} $(\ca{\OAff}\subseteq 
\left(\ca{\euclg}\cap\ca{\relst}\cap\ca{\galst}\right))$}
Because $\Bw$ is a concept of  $\euclg$, $\relst$ and $\galst$.
\end{proofofpart}

\begin{proofofpart}{\ref{leiras}\eqref{Le4} 
$\left(\left(\ca{\euclg}\cup\ca{\relst}\cup
\ca{\newtst}\right)\subseteq
\ca{\Lclst}\right)$}
By Theorem~\ref{autgroups}, we
have that 
$\AffAut{\euclg}=\EuclSim$,
$\AffAut{\relst}=\PoiSim$,
$\AffAut{\newtst}= \GenTriv$ and 
$\AffAut{\Lclst}=\TrivSim$.
By 
Lemma~\ref{nevetkelladni-prop}, 
each of
$\EuclSim$, $\PoiSim$ and
$\GenTriv$ contains  $\TrivSim$, 
whence each of
$\AffAut{\euclg}$,
 $\AffAut{\relst}$
and $\AffAut{\newtst}$
contains  $\AffAut{\Lclst}$. 
This implies, by \ref{ER2} that
$\ca{\euclg}$, $\ca{\relst}$ and
$\ca{\newtst}$ are all contained
in $\ca{\Lclst}$, as claimed.
\end{proofofpart}

\begin{proofofpart}{\ref{leiras}\eqref{Le5} $($correctness of Table~\ref{tab:concept-locations}$)$}

We have already shown (\ref{leiras}\eqref{Le1}) that 
$  \ca{\la \F^d,\llcon,\Bw\ra} 
\defeq  \ca \relst
=  \ca \minkst
\defeq  \ca{\la\F^d,\cng{\mu},\Bw\ra}$. 
Therefore, for any coordinate geometry
$\geometry$ over $\field$, we have that
\begin{equation}
\label{interdef1}
\llcon\in\ca{\geometry}\quad\Longleftrightarrow\quad 
\cng{\mu}\in\ca{\geometry}.
\end{equation}
Similarly, by \eqref{deltaeq}, we have 
\begin{equation}
\label{interdef2}
\bm{\updelta}\in\ca{\geometry}\quad \Longleftrightarrow\quad 
\{ \llcon,\abstime \} \subseteq \ca{\geometry}.
\end{equation}
Furthermore, since $\abstime$ is definable in terms
of $\cng{\abstime}$ 
(as $\vec{p}\,\abstime\, \vec{q}\; \Leftrightarrow\;
(\vec{p},\vec{q}\,)\cng{\abstime}(\vec{p},\vec{q}\,)$), we have
that
\begin{equation}
\label{interdef3}
\left(\cng{\abstime}\in\ca{\geometry}\ \Longrightarrow\ \abstime\in
\ca{\geometry}\right)\
\text{ and }\ \left(\abstime\not\in\ca{\geometry}\ \Longrightarrow\
\cng{\abstime}\not\in\ca{\geometry}\right).
\end{equation} 
We now prove the correctness of the Table, one column at a time. For techniocal reasons, we deal with column 4 ($\newtst$)
before column 3 ($\galst$).

\medskip

\underline{Table~\ref{tab:concept-locations} column 1: $\euclg \defeq \la \F^d,\cng{},\Bw\ra$}.
\emph{Required to prove}: 
$\cng{}$ is a concept; and
$\cng{\mu}$, $\llcon$, $\cng{\abstime}$, $\abstime$,
$\rest$ and $\bm{\updelta}$ are not concepts, of 
$\euclg$.

Clearly, $\cng{}\in\ca{\euclg}$.
 To prove that the other
relations are not concepts, it is enough to
find an  automorphism of $\euclg$ that does 
not respect
them.  Let $E\colon\F^d\rightarrow \F^d$ 
be the linear transformation that 
takes the unit vectors $\unit_1$ and
$\unit_2$ respectively to
$\frac{3}{5}\unit_1+\frac{4}{5}\unit_2$
and $\frac{4}{5}\unit_1-\frac{3}{5}\unit_2$, and
keeps the rest of the unit vectors fixed.
Of course $E(\origin\,)=\origin$ as
$E$ is a linear transformation. Transformation
$E$ is therefore a bijection as it takes the
unit vectors to $d$ distinct non-zero vectors, 
which are linearly independent 
and therefore form a basis for $F^d$.
By Lemma~\ref{EuclSim-lemma}\eqref{EuclSim-units}, 
it is easy to 
check that $E$ is an  Euclidean similarity, 
hence by Theorem~\ref{autgroups}\eqref{Beq1},
$E$ is an automorphism of $\euclg$.
Transformation $E$ respects neither of $\rest$,
$\abstime$, $\llcon$ 
 because
$\origin\,\rest\,\unit_1$,
$\origin\,\abstime\,\unit_2$ and
$\origin\,\llcon\, (\unit_1+\unit_2)$ all hold, but
none of
$E(\origin\,)\,\rest\, E(\unit_1)$,
$E(\origin\,)\,\abstime\, E(\unit_2)$,
$E(\origin\,)\,\llcon\, E(\unit_1+\unit_2)$
do. 
Therefore, $\rest,\abstime,\llcon\not\in
\ca{\euclg}$. By these, \eqref{interdef1},
\eqref{interdef2} and \eqref{interdef3}, we have
that $\cng{\mu}, \cng{\abstime},\bm{\updelta}
\not\in\ca{\euclg}$.

{\medskip}

\underline{Table~\ref{tab:concept-locations} column 2: $\relst \defeq \la\F^d,\llcon,\Bw\ra$}.
\emph{Required to prove}: $\cng{\mu}$ and $\llcon$ are concepts; and
$\cng{}$, $\cng{\abstime}$, $\abstime$,
$\rest$ and $\bm{\updelta}$
are not concepts, of $\relst$.

Clearly, $\llcon\in\ca{\relst}$.
Therefore, by \eqref{interdef1}, 
$\cng{\mu}\in\ca{\relst}$.
To prove that the remaining relations are
not concepts, we will find an automorphism
of $\relst$ that does not respect them. 
Let $P\colon\F^d\rightarrow\F^d$ be the
linear transformation that takes the
unit vectors $\unit_1$ and $\unit_2$ respectively to
 $\frac{5}{3}\unit_1+\frac{4}{3}\unit_2$ 
and $\frac{4}{3}\unit_1+\frac{5}{3}\unit_2$
and keeps the rest of the unit vectors fixed.
Transformation $P$ is a bijection as
it takes the unit vectors to linearly 
independent vectors.
By Lemma~\ref{PoiSim-lemma}, it is easy to
see that $P$ is a  Poincar\'e similarity.
Therefore, by Theorem~\ref{autgroups}{\eqref{Beq3}},
$P$ is an automorphism of $\relst$. 
However, 
$P$ respects neither $\cng{}$, $\abstime$ nor
$\rest$, because $\origin\, \rest\, \unit_1$,
$\origin\,\abstime\,\unit_2$ and 
$(\origin,\unit_1-\unit_2)\cng{}
(\origin,\unit_1+\unit_2)$
all hold, but none of   
$P(\origin\,)\, \rest\, P(\unit_1)$,
$P(\origin\,)\,\abstime\,P(\unit_2)$, 
$(P(\origin\,),P(\unit_1-\unit_2))\cng{}
(P(\origin\,),P(\unit_1+\unit_2))$ do.
Therefore, 
$\cng{},\abstime,\rest\not\in\ca{\relst}$.
By \eqref{interdef2} and \eqref{interdef3}, 
$\cng{\abstime},\bm{\updelta}\not\in\ca{\relst}$.

\medskip

\underline{Table~\ref{tab:concept-locations} column 4: $\newtst \defeq \la \F^d,\cng{\abstime},\rest,\Bw\ra$}.
\emph{Required to prove}: $\cng{\abstime}$,
$\abstime$ and $\rest$ are concepts; and
$\cng{}$, $\cng{\mu}$, $\llcon$ and  $\bm{\updelta}$
are not concepts, of $\newtst$.

By definition of $\newtst$ and
\eqref{interdef3},
we have that $\{ \cng{\abstime},\abstime,
\rest\} \subseteq \ca{\newtst}$. 
To prove that the remaining relations are not
concepts, it is enough to find an automorphism
of $\newtst$ that does not respect them.
Let $N$ be the linear 
transformation that takes  
$\unit_1$ to $2\unit_1$ and keeps the rest of 
the
unit vectors fixed. 
It is easy to see that $N$ is a bijection that
respects $\rest$ and $\abstime$, and
that it ``does not change the squared Euclidean
distance''
between $\abstime$-related points,
\ie
$\sqEd{\vec{p}}{\vec{q}}=\sqEd{N(\vec{p}\,)}{N(\vec{q}\,)}$
if $\vec{p}\,\abstime\,\vec{q}$. Therefore,
$N$ also respects $\cng{\abstime}$, and
is an automorphism of $\newtst$.
However, $N$
respects neither $\cng{}$ nor $\llcon$ because
$(\origin,\unit_1)\,\cng{}\,(\origin,\unit_2)$ 
and $\origin\,\llcon\,(\unit_1+\unit_2)$ hold, but
neither $\left(N(\origin\,),N(\unit_1)\right)\,
\cng{}\, \left(N(\origin\,),N(\unit_2)\right)$
nor $N(\origin\,)\,\llcon\, N(\unit_1+\unit_2)$
hold. Therefore, 
$\cng{},\llcon\not\in\ca{\newtst}$. By 
\eqref{interdef1} and \eqref{interdef2},
we have that $\cng{\mu},\bm{\updelta}\not\in\ca{\newtst}$.

\medskip

\underline{Table~\ref{tab:concept-locations} column 3: $\galst \defeq \la\F^d,\cng{\abstime},\Bw\ra$}.
\emph{Required to prove}: $\cng{\abstime}$,
$\abstime$ are concepts; and $\cng{}$, $\cng{\mu}$,
$\llcon$, $\rest$ and $\bm{\updelta}$ are not
concepts of $\galst$.

We have 
that
$\{ \cng{\abstime},\abstime \} \subseteq \ca{\galst}$ by 
definition of $\galst$ and \eqref{interdef3}.
 By $\ca{\galst}\subseteq\ca{\newtst}$ and
$\cng{},\cng{\mu},\llcon,\bm{\updelta}\not
\in\ca{\newtst}$, we have that
 $\cng{},\cng{\mu},\llcon,\bm{\updelta}\not
\in\ca{\galst}$.
To prove that  
$\rest\not\in\ca{\galst}$, we will find an
automorphism of $\galst$ that does not respect $\rest$. 
Let
$G\colon\F^d\rightarrow\F^d$ be the linear 
transformation that takes 
$\unit_1$ to $\unit_1+\unit_2$ and keeps the
 other unit vectors fixed. It is easy to
see that $G$ is a bijection that takes 
$\abstime$-related points to 
$\abstime$-related points
and it ``does not change the squared Euclidean
distance''
between $\abstime$-related points. Therefore,
$G$ is an automorphism of $\galst$. But clearly,
 $G$ does not respect $\rest$,
because $\origin\,\rest\,\unit_1$ holds,
but $G(\origin\,)\,\rest\, G(\unit_1)$ does 
not hold. Therefore, $\rest\not\in\ca{\galst}$.

\medskip

\underline{Table~\ref{tab:concept-locations} column 5: $\Lclst \defeq \la F^d, \cng{\abstime}, \llcon, \Bw \ra$}.
\emph{Required to prove}: $\cng{}$,  $\cng{\mu}$, $\llcon$, $\cng{\abstime}$,
$\abstime$, $\rest$ and $\bm{\updelta}$ are all concepts of
$\Lclst$.

We have already seen that
$\cng{}\in\ca{\euclg}$; 
$\{ \cng{\mu},\llcon \} \subseteq
\ca{\relst}$; and 
$\{ \cng{\abstime},\abstime,\rest \} \subseteq \ca{\newtst}$;
since Theorem \ref{leiras}\eqref{Le3} tells us that
\[
\left(\ca{\euclg}\cup\ca{\relst}\cup
\ca{\newtst}\right)\subseteq\ca{\Lclst},
\]
we have $\{ \cng{},\cng{\mu},\llcon,\cng{\abstime},
\abstime,\rest \} \subseteq \ca{\Lclst}$. Finally, by \eqref{interdef2}, we also have that $\bm{\updelta}\in\ca{\Lclst}$.
\end{proofofpart}

This completes the proofs required for Theorem \ref{leiras}.

\subsubsection{Proof of Theorem \ref{leiras2}}

\begin{proofofpart}{\ref{leiras2}\eqref{LL1} and \ref{leiras2}\eqref{LL2}}

Since the proofs of parts \eqref{LL1} and \eqref{LL2} overlap to some extent, we will prove them together,
considering each possible choice of $\geometry$ in turn. But first, we make some general observations that will 
simplify things below.

Assume that $\geometry$ is  a coordinate geometry
such that $\ca{\geometry}\subseteq\ca{\Lclst}$,
and that relation $\rel$ is definable
in terms of relation $\rel^+$ where  
$\{ \rel,\rel^+ \} \subseteq \ca{\Lclst}$. Then
\[
\ca{\la\geometry,\rel\ra}=
\ca{\Lclst}\quad \Longrightarrow\quad 
\ca{\la\geometry,\rel^+\ra}=\ca{\Lclst}
\]
holds 
because $\ca{\la \geometry,\rel\ra}
\subseteq\ca{\la \geometry,\rel^+\ra}
\subseteq\ca{\Lclst}$.
By Theorem~\ref{leiras}, for any
$\geometry\in\{\relst, \euclg, \galst, 
\newtst\}$ and  
$\rel\in\{\cng{},\cng{\mu},\llcon, \cng{\abstime}, \abstime,
\rest, \bm{\updelta}\}$, we have that
$\ca{\geometry}\subseteq\ca{\Lclst}$ and
$\rel\in\ca{\Lclst}$. Therefore,
for any geometry 
$\geometry\in\{\relst, \euclg, \galst, 
\newtst\}$, we have that
\begin{align}
\label{Or1}
\ca{\la \geometry,\abstime\ra}=\ca{\Lclst}
\quad \Longrightarrow & \quad \ca{\la 
\geometry,\cng{\abstime}\ra}=\ca{\Lclst}, \\
\label{Or2}
\ca{\la \geometry,\abstime\ra}=\ca{\Lclst}
\quad \Longrightarrow & \quad \ca{\la 
\geometry,\bm{\updelta}\ra}=\ca{\Lclst},\\ 
\label{Or3}
\ca{\la \geometry,\llcon\ra}=\ca{\Lclst}
\quad \Longrightarrow & \quad \ca{\la 
\geometry,\cng{\mu}\ra}=\ca{\Lclst}, \\
\label{Or4}
\ca{\la \geometry,\llcon\ra}=\ca{\Lclst}
\quad \Longrightarrow & \quad \ca{\la 
\geometry,\bm{\updelta}\ra}=\ca{\Lclst} 
\end{align}
because $\abstime$ and $\lambda$ are definable
in  terms of $\bm{\updelta}$ by \eqref{deltaeq},
$\abstime$ is definable in terms of 
$\cng{\abstime}$ (as $\vec{p}\,\abstime\,\vec{q}
\,\Leftrightarrow\, (\vec{p},\vec{q}\,)\cng{\abstime}(\vec{p},\vec{q}\,)$)
 and $\llcon$ is definable
in terms of $\cng{\mu}$ 
(as $\vec{p}\,\llcon\,\vec{q}\,\Leftrightarrow\,
(\vec{p},\vec{q}\,)\cng{\mu}(\vec{p},\vec{p}\,)$).

Also, by Theorem~\ref{autgroups}\eqref{Beq6} (which
says that $\AffAut{\Lclst}=\TrivSim$) and
by \ref{CER2}, we have for any finitely field-definable coordinate geometry $\geometry$, that
\begin{equation}
\label{Mik}
\ca{\la \geometry,\rel\ra}=\ca{\Lclst}\
\ \Longleftrightarrow\ \AffAut{\la \geometry,
\rel\ra}=\TrivSim.
\end{equation}

\medskip

\underline{Case $\geometry = \Eucl$}.
\emph{Required to prove}: 
$\la \euclg, \cng{\abstime}\ra$,
 $\la \euclg, \abstime\ra$, $\la \euclg, \rest\ra$
and $\la \euclg,\bm{\updelta}\ra$ are definitionally
equivalent to $\Lclst$; and if $d>2$,
$\la \euclg,\cng{\mu}\ra$ and $\la\euclg,\llcon\ra$ 
are definitionally equivalent to
$\Lclst$.

By 
Theorem~\ref{autgroups}\eqref{Beq1},
$\AffAut{\euclg}=\EuclSim$.
By this and  
Lemma~\ref{nevetkelladni-prop}\eqref{Ai4}, 
we have
that 
\begin{equation}
\label{EuS}
\begin{aligned}
\AffAut{\la \euclg,\abstime\ra} 
& = 
\AffAut{\euclg}\cap
\aut{\la \F^d,\abstime\ra} \\ 
& =
\EuclSim\cap 
\aut{\la\F^d,\abstime\ra}
=
\TrivSim
\end{aligned}
\end{equation}
and likewise
\begin{equation}
\label{EuR}
\AffAut{\la \euclg,\rest\ra}=\EuclSim\cap\aut{
\la\F^d,\rest\ra}=\TrivSim.
\end{equation}
Similarly, by 
Lemma~\ref{nevetkelladni-prop}\eqref{Ai5}, by
$\relst \defeq \la\F^d,\llcon,\Bw\ra$ and by $\AffAut{\relst}=
\PoiSim$ (Theorem \ref{autgroups}{\eqref{Beq3}}), we have, when $d > 2$,
\begin{equation}
\label{EuL}
\begin{aligned}
\AffAut{\la \euclg,\llcon\ra}
&=
\AffAut{\euclg}\cap\AffAut{\relst} \\
& =
\EuclSim\cap\PoiSim 
 =
\TrivSim
\end{aligned}
\end{equation}
Therefore, by \eqref{Mik}, \eqref{EuS},
\eqref{EuR}, \eqref{Or1} and \eqref{Or2},
we have that 
$\ca{\la\euclg,\rest\ra}=\ca{\la \euclg,
\abstime\ra}=\ca{\la \euclg,\cng{\abstime}\ra}=
\ca{\la \euclg,\bm{\updelta}\ra}=\ca{\Lclst}$.
And by \eqref{Mik}, \eqref{EuL} 
and
\eqref{Or3}, we also have 
$\ca{\la\euclg,\llcon\ra}=
\ca{\la\euclg,\cng{\mu}\ra}=\ca{\Lclst}$
when $d>2$.

{\medskip}

\underline{Case $\geometry =\relst$}:
\emph{Required to prove}: 
$\la\relst,\cng{\abstime}\ra$,
$\la \relst,\abstime\ra$, $\la\relst,\rest\ra$ and
$\la \relst, \bm{\updelta}\ra$ are definitionally
equivalent to $\Lclst$; and if $d>2$,
$\la\relst,\cng{}\ra$ is definitionally equivalent
to $\Lclst$.

By Theorem~\ref{autgroups}{\eqref{Beq3}}, we have that
$\AffAut{\relst}=\PoiSim$. By this and
Lemma~\ref{nevetkelladni-prop}\eqref{Ai3},
\begin{alignat}{4}
\label{ReS}
\AffAut{\la\relst,\abstime\ra} & = \PoiSim\cap\aut{\la \F^d,\abstime\ra} &~= \TrivSim, & \text{ and }  \\
\label{ReR}
\AffAut{\la\relst,\rest\ra}    & = \PoiSim\cap\aut{\la \F^d,\rest\ra}    &~= \TrivSim.          
\end{alignat}
By \eqref{Mik}, \eqref{ReS}, \eqref{ReR}, \eqref{Or1}
and \eqref{Or2}, we have that
$\ca{\la\relst,\rest\ra}=
\ca{\la\relst,\abstime\ra}=
\ca{\la\relst,\cng{\abstime}\ra}=
\ca{\la\relst,\bm{\updelta}\ra}=\ca{\Lclst}$.

From the definitions of $\euclg$ and $\relst$,
we have that $\la \relst,\cng{}\ra \doteq
\la\euclg,\llcon\ra$.
We have already seen that
$\ca{\la\euclg,\llcon\ra}=\ca{\Lclst}$ if
$d>2$. Thus 
$\ca{\la\relst,\cng{}\ra}=\ca{\Lclst}$
if $d>2$.

{\medskip}

\underline{Case $\geometry = \galst$}.
\emph{Required to prove}:
$\la \galst,\cng{}\ra$,
$\la\galst,\cng{\mu}\ra$,
$\la\galst,\llcon\ra$ and
$\la\galst,\bm{\updelta}\ra$
are all definitionally equivalent
to $\Lclst$.

Recall that $\Lclst \defeq \la \galst,\llcon\ra$.
So by \eqref{Or3} and \eqref{Or4},
we have that $\ca{\la\galst,\cng{\mu}\ra}=
\ca{\la\galst,\bm{\updelta}\ra}
=\ca{\Lclst}$.

Now recall that $\galst \defeq \la\F^d,\cng{\abstime},
\Bw\ra$ and
$\euclg \defeq \la\F^d,\cng{},\Bw\ra$;
hence $\la \galst,\cng{}\ra 
\doteq \la\euclg,\cng{\abstime}\ra$.
We have already seen that
$\ca{\la\euclg,\cng{\abstime}\ra}=
\ca{\Lclst}$. Therefore, 
$\ca{\la \galst,\cng{}\ra}=\ca{\Lclst}$.

{\medskip}

\underline{Case $\geometry = \newtst$}.
\emph{Required to prove}:
$\la\newtst,\cng{}\ra$,
$\la\newtst,\cng{\mu}\ra$,
$\la\newtst,\llcon\ra$ and
$\la\newtst,\bm{\updelta}\ra$
are all definitionally equivalent to $\Lclst$.

Recall from Theorem \ref{leiras}(\ref{Le2},\ref{Le4},\ref{Le5}) 
that $\ca{\galst}\subseteq\ca{\newtst}\subseteq
\Lclst$
and $\{\cng{},\cng{\mu},\llcon,\bm{\updelta}\}
\subseteq\Lclst$. It follows, for any $\rel\in\{\cng{},\cng{\mu},
\llcon,\bm{\updelta}\}$, that
\[
\ca{\la \galst,\rel\ra}\subseteq\ca{\la \newtst,\rel\ra}
\subseteq\ca{\Lclst}.
\]
We have already seen that for any
$\rel\in\{\cng{},\cng{\mu},
\llcon,\bm{\updelta}\}$,
$\ca{\la\galst,\rel\ra}=\ca{\Lclst}$.
Therefore,
for any $\rel\in\{\cng{},\cng{\mu},
\llcon,\bm{\updelta}\}$,
$\ca{\la\newtst,\rel\ra}=\ca{\Lclst}$.

\medskip
This completes the proofs of items \ref{leiras2}\eqref{LL1}
and \ref{leiras2}\eqref{LL2}.
\end{proofofpart}

\begin{proofofpart}{\ref{leiras2}\eqref{LL3}}
We have to show that 
  $\la \relst, \cng{} \ra$,
	$\la \euclg, \cng{\mu} \ra$ and
	$\la \euclg, \llcon \ra$
are definitionally equivalent to one another; and that they are not 
definitionally equivalent to $\Lclst$ when $d=2$.

Note first that $\la \euclg,\llcon\ra$ and $\la\relst,\cng{}\ra$
are clearly definitionally equivalent, because $\euclg \defeq \la \F^d,\cng{},\Bw\ra$ and
$\relst \defeq \la \F^d,\llcon,\Bw\ra$.
Geometries $\la \euclg,\cng{\mu}\ra$ and
$\la\euclg,\llcon\ra$ are definitionally
equivalent, too,  because we already know from Theorem~\ref{leiras}\eqref{Le1} that
$\minkst \defeq \la\F^d,\cng{\mu},\Bw\ra$ and $\relst \defeq \la \F^d,\llcon,\Bw\ra$
 are definitionally
equivalent.
However, geometries $\la \euclg,\llcon\ra$
and $\Lclst$ are not definitionally equivalent if $d=2$,
because the linear transformation that interchanges
the unit vectors $\unit_1$ and $\unit_2$ is
an automorphism of $\la \euclg,\llcon\ra$, but it
is not an automorphism of $\Lclst$ as it does not respect
$\cng{\abstime}$. 
Finally, it follows from transitivity of  
that 
$\la \relst, \cng{} \ra$ and 	$\la \euclg, \cng{\mu} \ra$
are not definitionally equivalent to $\lclst$ either, when $d=2$.
\end{proofofpart}

\begin{proofofpart}{\ref{leiras2}\eqref{LL4}}
We have to show that $\la \OAff,\bm{\updelta}\ra$ is 
definitionally equal to $\Lclst$.

Recall that $\OAff \defeq \la\F^d,\Bw\ra$ and
$\relst \defeq \la\F^d,\llcon,\Bw\ra$,
and that  
$\ca{\la\F^d,\bm{\updelta}\ra}=
\ca{\la\F^d,\llcon,\abstime\ra}$
by \eqref{deltaeq}.
We have already seen that
$\ca{\la\relst,\abstime\ra}=
\ca{\Lclst}$.
Therefore, $\ca{\la \OAff,\bm{\updelta}\ra}=
\ca{\la \OAff,\llcon,\abstime\ra}=
\ca{\la \relst,\abstime\ra}=\ca{\Lclst}$.
\end{proofofpart}

This completes the proofs required for Theorem \ref{leiras2}.

\section{Concluding remarks}

In this paper, we have shown the connection between the concept-sets
of Galilean, Newtonian, Late Classical, Relativistic and Minkowski
spacetimes and ordered affine and Euclidean geometries illustrated by
Figures~\ref{fig:hasse} and \ref{tartalmazas-fig}.  Here are some open
problems:
\begin{problem}
  Is there a common concept of Newtonian spacetime and Relativistic spacetime which is not a concept of ordered affine geometry? In other words, is  $\ca{\OAff}$ a proper subset of the intersection $\ca{\newtst} \cap\ca{\relst}$?  
\end{problem}
\begin{problem}
  Is there a common concept of Galilean spacetime and Relativistic spacetime which is not a concept of ordered affine geometry? In other words, is  $\ca{\OAff}$ a proper subset of the intersection $\ca{\galst} \cap\ca{\relst}$?  
\end{problem}
\begin{problem}
Is there a common concept of Euclidean geometry and Galilean spacetime which is not a concept of ordered affine geometry? In other words, is  $\ca{\OAff}$ a proper subset of the intersection $\ca{\euclg} \cap\ca{\galst}$?  
\end{problem}
\begin{problem}
  Is there a common concept of Euclidean geometry and Newtonian spacetime which is not a concept of ordered affine geometry? In other words, is  $\ca{\OAff}$ a proper subset of the intersection $\ca{\euclg} \cap\ca{\newtst}$?  
\end{problem}
\begin{problem}
  Is there a common concept of Euclidean geometry and Relativistic spacetime which is not a concept of ordered affine geometry? In other words, is  $\ca{\OAff}$ a proper subset of the intersection $\ca{\euclg} \cap\ca{\relst}$?  
\end{problem}

Since $\ca{\galst}\subseteq \ca{\newtst}$, Problems 1 and 2
are not independent, and the same holds for Problems 3 and 4.

\bibliographystyle{amsalpha}
\bibliography{LogRel12019}

\end{document}